\newtheorem{theorem}{Theorem}[section]
\newtheorem{definition}{Definition}[section]
\newtheorem{lemma}[theorem]{Lemma}
\newtheorem{proposition}[theorem]{Proposition}
\newtheorem{remark}[theorem]{Remark}
\newcommand{\erre}{\mathds{R}}
\newcommand{\partder}[1]{\frac{\partial}{\partial {#1}}}
\newcommand{\gradh}[1]{\nabla_{H^m}{#1}}
\newcommand{\absh}[1]{{\left|#1\right|_{H^m}}}
\newcommand{\ra}{\rightarrow}
\newcommand{\set}[1]{{\left\{#1\right\}}}               % tra parentesi graffe
\newcommand{\pa}[1]{{\left(#1\right)}}                  % tra tonde
\newcommand{\sq}[1]{{\left[#1\right]}}                  % tra quadre
\newcommand{\abs}[1]{{\left|#1\right|}}                 % valore assoluto
\newcommand{\pair}[1]{\left\langle#1\right\rangle}      % pairing
\newcommand{\eps}{\varepsilon}                           % epsilon
\renewcommand{\hat}[1]{\widehat{#1}}
\renewcommand{\tilde}[1]{\widetilde{#1}}
\title{Articolo}
\author{Marco Magliaro, Luciano Mari, Paolo Mastrolia, Marco Rigoli}
\begin{document}
\title{\textbf{Keller-Osserman type conditions for \\differential
inequalities with gradient terms on the  Heisenberg group}}
\date{}
\maketitle
\scriptsize \begin{center} Dipartimento di Matematica,
Universit\`a
degli studi di Milano,\\
Via Saldini 50, I-20133 Milano (Italy)\\
E-mail addresses: magliaro@mat.unimi.it, lucio.mari@libero.it,\\
paolo.mastrolia@unimi.it, Marco.Rigoli@mat.unimi.it
\end{center}
\begin{abstract}
 \noindent The aim of this paper is to study the qualitative
behaviour of non-negative entire solutions of certain differential
inequalities involving gradient terms on the Heisenberg group. We
focus our investigation on the two classes of inequalities of the
form $\Delta^\varphi u \ge f(u)l(|\nabla u|)$ and $\Delta^\varphi
u \ge f(u) - h(u) g(|\nabla u|)$, where $f,l,h,g$ are non-negative
continuous functions  satisfying certain monotonicity properties.
The operator  $\Delta^\varphi$, called the
\emph{$\varphi$-Laplacian}, can be viewed as a
 natural generalization of the $p$-Laplace operator recently
 considered by various authors in this setting. We prove some
 Liouville theorems introducing two new Keller-Osserman type
 conditions, both extending the classical one which appeared long
 ago in the study of the prototype differential inequality $\Delta
 u \ge f(u)$ in $\erre^m$. Furthermore, we show sharpness of our
 conditions when we specialize to the case of the $p$-Laplacian.
 Needless to say, our results continue to hold, with the obvious
 minor modifications, also in the Euclidean space.
\end{abstract}
%\scriptsize{\tableofcontents}
 \normalsize
\begin{section}{Introduction and main results}
To state our main results  we first need to recall some
preliminary facts  and to introduce the notations that we shall
use in the sequel.\par
Let $H^m$ be the Heisenberg group of dimension $2m+1$, that is,
the Lie group with underlying manifold  $\erre^{2m+1}$and group
structure defined as follows: for all $q, q' \in H^m$, $q=(z,
t)=(x_1, \ldots, x_m, y_1, \ldots, y_m, t)$,  $q' = (z', t') =
(x_1', \ldots, x_m', y_1', \ldots, y_m', t')$,
\[
q \circ q' = \pa{z+z', t+t' + 2\sum_{i=1}^m \pa{y_ix_i' -
x_iy_i'}}.
\]
A basis for the Lie algebra of left-invariant vector fields on
$H^m$ is given by
\begin{equation}
  X_j = \partder{x_j} + 2y_j \partder{t}, \qquad Y_j = \partder{y_j}
  -2x_j\partder{t}, \qquad \partder{t}
\end{equation}
for $j=1, \ldots, m$. This basis satisfies Heisenberg's canonical
commutation relations for position and momentum,
\begin{equation}\label{commutation}
  \sq{X_j, Y_k} = -4 \delta_{jk}\partder{t},
\end{equation}
all other commutators being zero. It follows that the vector
fields $X_j, Y_k$ satisfy Hörmander's condition, and the
\emph{Kohn-Spencer Laplacian}, defined as
\begin{equation}
  \Delta_{H^m} = \sum_{j=1}^m \pa{X_j^2 + Y_j^2}
\end{equation}
is hypoelliptic by Hörmander's theorem (see \cite{HOR}).

\noindent In $H^m$ there are a ``natural'' origin $o=(0, 0)$ and a
distinguished \emph{distance function from zero} defined, for
$q=(z, t) \in H^m$, by
\begin{equation}
  r(q)= r(z, t) = \pa{|z|^4 + t^2}^{1/4}
\end{equation}
(where $|\cdot|$ denotes the Euclidean norm in $\erre^{2m}$),
which is homogeneous of degree $1$ with respect to the Heisenberg
dilations $(z, t) \mapsto (\delta z, \delta^2 t),\ \delta>0$. This
gives rise to a distance on $H^m$, called the \emph{Koranyi
distance}, and defined by
\begin{equation}
  d(q, q') = r(q^{-1} \circ q') , \qquad q,q' \in H^m.
\end{equation}
We set
\[
B_R(q_o) = \set{q \in H^m : d(q,q_o)<R}
\]
to denote the (open) \emph{Koranyi ball} of radius $R$ centered at
$q_o$. We simply use $B_R$ for balls centered at $q_o=o$. The
\emph{density function} with respect to $o$ is the function
\begin{equation}
  \psi(q) = \psi(z, t) = \frac{|z|^2}{r^2(z, t)}\quad \text{for}\,\,
  q=(z, t) \neq o;
\end{equation}
note that $0 \leq \psi \leq 1$. For $u \in C^1(H^m)$, the
\emph{Heisenberg gradient}  $\nabla_{H^m}u$ is given by
\begin{equation}
  \nabla_{H^m}u = \sum_{j=1}^m (X_ju)X_j + (Y_ju)Y_j,
\end{equation}
(so that, for $f\in C^1(\erre)$, $\nabla_{H^m}f(u) =
f'(u)\nabla_{H^m}u$), and a \emph{\ $\cdot$ product} on the span
of $X_j, Y_j$  is defined, for $W = w^j X_j + \widetilde{w}^j
Y_j$, $Z = z^j X_j + \widetilde{z}^j Y_j$ by the formula
\begin{equation}
   W \cdot Z = \sum_{j=1}^m
  w^jz^j+ \widetilde{w}^j\widetilde{z}^j.
\end{equation}
By definition, $\abs{\gradh{u}}_{H^m}^2 = \gradh{u} \cdot
\gradh{u}$, and we have the validity of the Cauchy-Schwarz
inequality
\begin{equation}
  \absh{\gradh{u} \cdot \gradh{v}} \leq
  \absh{\gradh{u}}\absh{\gradh{v}}.
\end{equation}

The distance function $r$ satisfies the following fundamental
relations involving $\psi$:
\begin{equation}
  \Delta_{H^m}r = \frac{2m+1}{r}\psi \qquad \text{in}\,\, H^m \backslash
  \{o\},
\end{equation}
\begin{equation}
  |\nabla_{H^m}r|_{H^m}^2  = \psi \quad \quad \quad \ \text{in}\,\,H^m \backslash \{o\}.
\end{equation}
Recently, some authors (see, for example, \cite{DOM}, \cite{CDG}
and \cite{BPT}) have studied a generalization of the Kohn
Laplacian, defined, for $p \in [2, +\infty)$, by
\begin{equation}
  \Delta^p_{H^m}u = \sum_{j=1}^m\sq{
  X_j\pa{|\nabla_{H^m}u|_{H^m}^{p-2}X_ju} +
  Y_j\pa{|\nabla_{H^m}u|_{H^m}^{p-2}Y_ju}}, \qquad u \in C^2(H^m),
\end{equation}
which can be considered as a natural $p$-Laplace operator in the
setting of the Heisenberg group.\par
In this paper we consider a further generalization, which we shall
call \emph{$\varphi$-Laplacian}, $\Delta^\varphi_{H^m}$, defined
for $u \in C^2(H^m)$ as follows:
\begin{equation}
  \Delta^\varphi_{H^m}u = \sum_{j=1}^m\sq{X_j\pa{|\nabla_{H^m}u|_{H^m}^{-1}\varphi\pa{|\nabla_{H^m}u|_{H^m}}X_ju}
  +
  Y_j\pa{|\nabla_{H^m}u|_{H^m}^{-1}\varphi\pa{|\nabla_{H^m}u|_{H^m}}Y_ju}},
\end{equation}
where $\varphi$ satisfies the structural conditions
\begin{equation}\label{Phi}\tag{$\Phi$}
  \begin{cases}
    \varphi \in C^0(\erre_0^+) \cap C^1(\erre^+), \quad  \varphi(0)=0, \\ \quad \varphi' >0 \qquad \text{ on } \, \erre_0^+.
  \end{cases}
\end{equation}
This family of operators, containing the $p$-Laplacian (obtained
with the choice $\varphi(t)=t^{p-1}$, $p>1$), has been recently
studied in the context of Riemannian geometry (see, for example,
\cite{PRS_MEM} for motivations and further references). Although
we shall focus our attention on this generalization, the main
example we  keep in mind is the $p$-Laplacian itself, to which an
entire section is devoted.\\
\par
The aim of this paper is to study weak (in the sense of Subsection
\ref{sotsezdue} below) non-negative entire solutions of
differential inequalities of the form
\begin{equation}\label{filaplaciano}
  \Delta^\varphi_{H^m}u \geq f(u)l(\absh{\gradh{u}}),
\end{equation}
where $f$ and $l$ satisfy respectively the following conditions:
\begin{equation}\label{F}\tag{$F$}
  \begin{cases}
    f \in C^0(\erre^+_0), \qquad f>0 \text{ on } \, \erre^+; \\
    f \text{ is  increasing on } \erre^+_0;
  \end{cases}
\end{equation}
\begin{equation}\label{L}\tag{$L$}
  \begin{cases}
  l \in C^0(\erre^+_0), \quad l>0 \, \text{ on }\, \erre^+; \\
  l \text{ is } C\text{-monotone non-decreasing on } \erre_0^+; \\
   \end{cases}
\end{equation}

  We recall that $l$ is said to be \emph{$C$-monotone non decreasing} on
  $\erre_0^+$ if, for some $C \geq 1$,
  \[
  \sup_{s \in [0, \,t]}l(s) \leq C l(t), \qquad \forall \, t \in \erre_0^+.
  \]
  Clearly, if $l$ is monotone non decreasing on $\erre_0^+$, then
  it is $1$-monotone non-decreasing on the same set; in fact the above
  condition allows a controlled oscillatory behaviour of $l$ on
  $\erre_0^+$.
  To express our next requests, from now on we assume that
 \begin{equation}\label{non_integr_inf}\tag{$\Phi$ \& $L$}
    \frac{t\varphi'(t)}{l(t)}  \in L^1(0^+)\backslash L^1(+\infty), \quad
    \frac{\varphi(t)}{l(t)} = o(1) \quad \text{as } \ t \rightarrow 0^+.
  \end{equation}
  Note that often (e.g. in the case of the $p$-Laplacian) the latter condition
  directly assures integrability
  at $0^+$ in the former. We define
  \begin{equation}\label{def_K}
    K(t) = \int_0^t \frac{s\varphi'(s)}{l(s)}\,\mathrm{d}s;
  \end{equation}
  \noindent observe that $K : \erre^+_0 \ra \erre_0^+$ is a
  $C^1$-diffeomorphism with
  \[
  K'(t) = \frac{t\varphi'(t)}{l(t)}>0,
  \]
  thus the existence of the increasing inverse $K^{-1} : \erre_0^+ \ra \erre^+_0$. Finally we set
  \[
  F(t) = \int_0^t f(s)\,\mathrm{d}s.
  \]

\begin{definition}
 The \textbf{\emph{generalized Keller-Osserman condition}} for
inequality
$$
\Delta^\varphi_{H^m} u \ge f(u) l(\absh{\gradh{u}})
$$
is the request:
  \begin{equation}\label{generalized_K-O}\tag{$KO$}
    \frac{1}{K^{-1}\pa{F(t)}} \in L^1(+\infty).
  \end{equation}
\end{definition}
\vspace{0.2cm}

Note that, in the case of the $p$-Laplace operator and $l \equiv
  1$, \eqref{generalized_K-O} coincides with the well known
  Keller-Osserman condition for the $p$-Laplacian, that is,
  $\frac{1}{F(t)^{1/p}} \in L^1(+\infty)$.\\
In order to deal with the presence of the density function $\psi$
in the version of our inequalities that we shall describe below,
we need to assume two ``relaxed homogeneity'' requests on
$\varphi'$ and $l$:
\begin{equation}\label{cond_ult_phi}\tag{$\Phi2$}
        s\varphi'(st)\le Ds^\tau\varphi'(t), \qquad \forall \ s \in [0, 1], \  t \in \erre^+_0,
\end{equation}
\begin{equation}\label{L2}\tag{$L2$}
   s^{1+\tau}l(t) \leq \Lambda \,l(st), \qquad \ \  \, \forall \ s \in [0, 1], \  t \in \erre^+_0,
\end{equation}
for some positive constants $D,\Lambda >0$ and $\tau \ge 0$. We
stress that \eqref{L2} is a mild requirement: for example, it is
satisfied by every $l(t)$ of the form
$$
l(t) = \sum_{k=0}^N C_k t^{\nu_k}, \quad N \in \mathds{N}, \qquad C_k \ge 0, \quad -\infty < \nu_k \le 1+\tau \quad \text{for every } k.
$$
Indeed, since $s\le 1$ we have
$$
l(st) = \sum_{k=0}^N C_k s^{\nu_k}t^{\nu_k} \ge \sum_{k=0}^N C_k s^{1+\tau}t^{\nu_k} = s^{1+\tau} l(t).
$$
Note also that, if \eqref{L2} is true for some $\tau_o$, then it
also holds for every $\tau \ge \tau_o$. This is interesting in the
case of the $p$-Laplacian, which trivially satisfies
\eqref{cond_ult_phi} for every $0\le \tau \le p-1$. In this case
the choice $\tau = p-1$ is the least demanding on $l(t)$. We also
observe that the coupling of \eqref{cond_ult_phi} and \eqref{L2}
does not automatically imply the integrability at $0^+$ in
\eqref{non_integr_inf}. For instance if $\varphi(t)=t^\tau$ and
$l(t)=t^{\tau+1}$, then \eqref{cond_ult_phi} and \eqref{L2} are
satisfied, but $\frac{t\varphi'(t)}{l(t)}\not\in L^1(0^+).$
\vspace{0.4cm}\\
We shall prove the following Liouville-type
result:
  \begin{theorem}\label{non_existence_th}
    Let $\varphi, \ f, \ l$ satisfy \eqref{Phi}, \eqref{F},
    \eqref{L} and
    \eqref{non_integr_inf}. Suppose also the validity of the relaxed homogeneity conditions
    \eqref{cond_ult_phi}, \eqref{L2}. If the generalized Keller-Osserman condition \eqref{generalized_K-O} holds, then
    every solution
    $0 \le u \in C^1(H^m)$ of
    \begin{equation}\label{disug_delta}
      \Delta^\varphi_{H^m}u \geq f(u)l(\absh{\gradh{u}}) \qquad
      \text{on}\,\,\,
      H^m
    \end{equation}
    is constant. Moreover, if \, $l(0)>0$, then $u \equiv 0$.
  \end{theorem}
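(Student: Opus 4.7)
The strategy is a Keller-Osserman-type comparison argument adapted to the sub-Riemannian geometry of $H^m$. I would argue by contradiction: assume $u\in C^1(H^m)$ is a non-constant non-negative weak solution of \eqref{disug_delta}. The plan is to build, for any base point $q_o\in H^m$ and any prescribed (small) annular width, a radial barrier $w(q)=\alpha(d(q_o,q))$ on a Koranyi annulus that blows up on its outer sphere, invoke a weak comparison principle to deduce $u\le w$ inside, and then let $q_o$ and the radii vary to force a uniform bound that, combined with the strong maximum principle for $\Delta^\varphi_{H^m}$-subsolutions, is incompatible with $u$ being non-constant.

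First, on the annulus $A(R_0,R_1)=\set{R_0<r_o<R_1}$ with $r_o(q)=d(q_o,q)$, I would compute $\Delta^\varphi_{H^m}w$ for $w=\alpha(r_o)$. Using the identities $\absh{\gradh{r_o}}^2=\psi$ and $\Delta_{H^m}r_o=\frac{2m+1}{r_o}\psi$ together with the translation invariance of the sub-Laplacian, after some algebra one finds
\begin{equation*}
    \Delta^\varphi_{H^m}w=\psi^{1/2}\sq{\varphi(\alpha'\sqrt{\psi})}'+\frac{2m+1}{r_o}\sqrt{\psi}\,\varphi(\alpha'\sqrt{\psi}),
\end{equation*}
where the prime denotes $r_o$-differentiation. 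The non-radial factor $\psi$ is the delicate point; the relaxed homogeneity requests \eqref{cond_ult_phi} and \eqref{L2} are tailored precisely to absorb it, via uniform estimates of the form $\varphi(\alpha'\sqrt{\psi})\le D\psi^{\tau/2}\varphi(\alpha')$ and $l(\alpha'\sqrt{\psi})\ge\Lambda^{-1}\psi^{(1+\tau)/2}l(\alpha')$, so that the super-solution requirement $\Delta^\varphi_{H^m}w\le f(w)\,l(\absh{\gradh{w}})$ reduces to a purely radial ODE inequality for $\alpha$, schematically
\begin{equation*}
    \sq{\varphi(\alpha')}'+\frac{2m+1}{r_o}\varphi(\alpha')\le C\,f(\alpha)\,l(\alpha').
\end{equation*}

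Second, I would produce an $\alpha$ realizing this reduced ODE with $\alpha\to+\infty$ at a finite $R_1>R_0$. Multiplying the equality case by $\alpha'$ and integrating, the definition \eqref{def_K} of $K$ yields $K(\alpha')\le C_1 F(\alpha)+C_2$, i.e.\ $\alpha'\le K^{-1}(C_1 F(\alpha)+C_2)$; a second quadrature gives the implicit relation
$\int_0^{\alpha(r_o)}\frac{d\tau}{K^{-1}(C_1 F(\tau)+C_2)}=r_o-R_0$.
Now \eqref{generalized_K-O} is exactly what makes the left-hand side bounded as $\alpha\to+\infty$, so $\alpha$ blows up within a finite $r_o$-interval, whose length can be made arbitrarily small by tuning constants. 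A weak comparison principle for $\Delta^\varphi_{H^m}$-inequalities (in the weak-solution sense of Subsection \ref{sotsezdue}) then yields $u\le w$ on $A(R_0,R_1)$ once the integration constant is chosen so that $\alpha(R_0)\ge\sup_{\partial B_{R_0}(q_o)}u$. Varying $q_o\in H^m$ and sending $R_0\to+\infty$ produces a uniform pointwise bound which, combined with the strong maximum principle, forces $u$ to be constant. If $l(0)>0$, a standard iteration of the same barrier construction with the initial value $\alpha(R_0)$ driven down to $0$ refines the conclusion to $u\equiv 0$.

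The main obstacle, in my view, is the first step: carefully handling the non-radial factor $\psi$ in $\Delta^\varphi_{H^m}w$ and verifying that \eqref{cond_ult_phi} and \eqref{L2} really do suffice to reduce everything to a $\psi$-independent radial ODE, with all constants uniformly controlled as $R_0\to+\infty$. A secondary, but essential, difficulty is establishing the weak comparison principle needed in the last step, since $\Delta^\varphi_{H^m}$ is degenerate where $\gradh{u}=0$ and the source term itself depends on $\absh{\gradh{u}}$; this will have to lean on the strict monotonicity encoded in \eqref{Phi}, \eqref{F} and \eqref{L}.
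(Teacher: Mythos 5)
Your overall strategy coincides with the paper's: radialize $\Delta^\varphi_{H^m}$, absorb the density $\psi$ through \eqref{cond_ult_phi} and \eqref{L2}, build an explicit blow-up supersolution $\alpha$ on a Koranyi annulus by the quadrature that \eqref{generalized_K-O} makes finite, and then compare with $u$. The radialization formula, the way you use \eqref{L2} to bound $l(\alpha'\sqrt{\psi})$ from below, and the construction of $\alpha$ are all essentially what the paper does (the paper defines $\alpha$ implicitly by $T_\sigma-t=\int_{\alpha(t)}^{+\infty}\mathrm{d}s/K^{-1}(\sigma F(s))$ and verifies the ODE inequality, rather than integrating the ODE, but this is the same computation read backwards).

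The genuine gap is the step ``a weak comparison principle then yields $u\le w$ on the annulus.'' No such comparison principle is available for the full inequality $\Delta^\varphi_{H^m}u\ge f(u)\,l(\absh{\gradh{u}})$: because the right-hand side depends on the gradient through $l$, which is merely continuous and $C$-monotone, a subsolution and a supersolution of this inequality cannot be ordered by boundary data via any standard monotonicity argument --- the terms $l(\absh{\gradh{u}})$ and $l(\absh{\gradh{w}})$ are simply not comparable. You flag this as a ``secondary difficulty'' to be resolved by ``strict monotonicity,'' but it is in fact the crux, and the paper's resolution is a different mechanism: it only proves a comparison principle for the pure operator inequality $\Delta^\varphi_{H^m}u\ge\Delta^\varphi_{H^m}v$ (Proposition \ref{comparison}), and then, instead of comparing $u$ and $v$ globally on the annulus, it looks at the set $\Gamma_\mu$ where $u-v$ attains its positive maximum $\mu$. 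There $\absh{\gradh{u}}=\absh{\gradh{v}}$, so the gradient factors cancel, and the strict monotonicity of $f$ in \eqref{F} gives the \emph{strict} operator inequality $\Delta^\varphi_{H^m}u>\Delta^\varphi_{H^m}v$ on a neighbourhood $V\supset\Gamma_\mu$; applying Proposition \ref{comparison} on the nested components $\Omega_{\xi,\rho}\subset V$ of $\{u>v+\rho\}$ as $\rho\uparrow\mu$ then produces the contradiction. Relatedly, your endgame (``vary $q_o$, send $R_0\to+\infty$, get a uniform bound'') is not how the contradiction is reached: the paper fixes one annulus, uses the maximum principle (Proposition \ref{maximum_principle}) to find a point where $u$ is within $\eta$ of $u^*$ while the supersolution is still $\le\eta$, and derives the contradiction from the resulting positive maximum of $u-v$. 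Without the maximum-point argument replacing the nonexistent sub/supersolution comparison, your proof does not close.
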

   The proof is achieved through the construction of a ``radial''
supersolution $v$ of \eqref{disug_delta}
  \noindent(see the next section for the precise definition) on an annular
  region $B_T \backslash B_{t_0}, \,\, 0 < t_0 < T$, which is small near $\partial
  B_{t_0}$ and blows up at $\partial B_T$. A careful comparison between $u$ and $v$ allows us to conclude that
  $u$ must necessarily be constant. As opposed to
  Osserman's approach (see \cite{OSS}), in order to construct the
  supersolution we have not tried to solve the
  radialization of \eqref{disug_delta}, since the presence of the
  gradient term may cause different behaviours near the first
  singular time. Roughly speaking, even if we could prove the local existence
  of a radial solution in a neighborhood of zero (which is not
  immediate due to the singularity of $1/r$ and possibly of
  $\varphi'$ in $0$), we cannot be sure that, in case the interval
  of definition is $[0,T)$, $T<+\infty$, the solution blows up at
  time $T$: \emph{a priori}, it may even happen that the solution remains bounded,
  but the first derivative blows up, giving rise to some sort of cusp.
  The necessity of excluding this case led us to a
  different approach: a blowing-up supersolution is explicitly
  constructed, exploiting directly
  the Keller-Osserman condition. Beside being elementary, this alternative method also reveals the reason why
  ($KO$) is indeed natural as an optimal condition for the existence or
  non-existence of solutions.\\
  \par
  As it will become apparent from the proof of Theorem
  \ref{non_existence_th} below, the result can be restated on the
  Euclidean space $\mathds{R}^m$ getting rid of request
  \eqref{cond_ult_phi} and (L2), which are related to the density function $\psi$. Indeed we have

  \begin{theorem}\label{non_existence_th_euclid}
    Let  $\varphi, \ f, \ l$ satisfy \eqref{Phi}, \eqref{F}, \eqref{L},
    \eqref{non_integr_inf} and the generalized
    Keller-Osserman condition \eqref{generalized_K-O}. Let $u \in C^1(\erre^m)$ be
    a non-negative solution of
    \begin{equation}
      \Delta^\varphi_{\erre^m} u = \operatorname{div}\pa{\abs{\nabla u}^{-1}
      \varphi\pa{\abs{\nabla u}}\nabla
      u}\geq f(u)l(\abs{\nabla u}) \qquad \text{on}\,\,\erre^m.
    \end{equation}
    Then $u$ is constant. Moreover, if $l(0) > 0$, then $u \equiv
    0$.
  \end{theorem}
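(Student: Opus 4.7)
The plan is to adapt the argument for Theorem~\ref{non_existence_th} to the Euclidean setting. The key observation is that on $\erre^m$ the Heisenberg density function $\psi$ is identically $1$, so the factors responsible for the relaxed homogeneity conditions \eqref{cond_ult_phi} and \eqref{L2} disappear from the radial formulas, and only \eqref{Phi}, \eqref{F}, \eqref{L}, \eqref{non_integr_inf} and \eqref{generalized_K-O} are needed.

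Fix $q_o \in \erre^m$, and seek a radial supersolution $v(x) = \alpha(|x - q_o|)$ of $\Delta^\varphi v \le f(v)\,l(|\nabla v|)$ on an annulus $B_T(q_o) \setminus B_{t_o}(q_o)$, with $\alpha(t_o) = \eta$ small, $\alpha' > 0$, and $\alpha(r) \to +\infty$ as $r \to T^-$. The Euclidean radial $\varphi$-Laplacian is
$$
\Delta^\varphi v \;=\; \varphi'(\alpha')\,\alpha'' \;+\; \frac{m-1}{r}\,\varphi(\alpha'),
$$
so with the first-order ansatz $K(\alpha') = \lambda\, F(\alpha)$ for some $\lambda \in (0,1)$, differentiating and using $K'(t) = t\varphi'(t)/l(t)$ gives $\varphi'(\alpha')\alpha'' = \lambda f(\alpha) l(\alpha')$, reducing the supersolution requirement to the pointwise condition
$$
\frac{m-1}{r}\,\varphi(\alpha') \;\le\; (1-\lambda)\, f(\alpha)\, l(\alpha'),
$$
which is arranged by a careful choice of $t_o$, $\eta$, $\lambda$ (exploiting that $r \ge t_o > 0$ and that $\varphi/l = o(1)$ at $0^+$ by \eqref{non_integr_inf}). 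The generalized Keller-Osserman condition \eqref{generalized_K-O} then produces a finite blow-up time via separation of variables:
$$
T - t_o \;=\; \int_\eta^{+\infty} \frac{ds}{K^{-1}(\lambda F(s))} \;<\; +\infty.
$$

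A weak comparison argument — exploiting the strict monotonicity of $\varphi$ in $\nabla u$ from \eqref{Phi}, the monotonicity of $f$ from \eqref{F}, and the $C$-monotonicity of $l$ from \eqref{L} — would then yield $u \le v$ on the annulus whenever $u \le \eta$ on $\partial B_{t_o}(q_o)$; the outer boundary is harmless because $v \to +\infty$ there. Letting $t_o \to 0$ (using continuity of $u$ at $q_o$) and then $\eta \to 0$ forces $u(q_o) = 0$, and since $q_o$ is arbitrary, $u \equiv 0$ when $l(0) > 0$. If $l(0) = 0$, any constant solves the inequality, so only constancy of $u$ is recovered. I expect the main obstacle to be the comparison step: the gradient-dependent right-hand side $l(|\nabla u|)$ rules out the classical weak maximum principle and forces a passage to an integral inequality where the monotonicity in the gradient variable of $\varphi$ is played off against $l$, mirroring the delicate argument the authors develop for the Heisenberg case.
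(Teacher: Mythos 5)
Your supersolution ansatz $K(\alpha')=\lambda F(\alpha)$ is exactly the paper's (they write $\sigma F(\alpha)=K(\alpha')$), and the blow-up time via \eqref{generalized_K-O} is right. But two parts of your sketch have genuine holes. First, the pointwise requirement $\frac{m-1}{r}\varphi(\alpha')\le(1-\lambda)f(\alpha)l(\alpha')$ is not "arranged" by $r\ge t_0$ and $\varphi/l=o(1)$ at $0^+$ alone: that asymptotic only controls the regime where $\alpha'$ is small, i.e.\ near $r=t_0$, whereas the inequality must also hold as $r\to T^-$ where $\alpha'\to+\infty$ and nothing is assumed about $\varphi/l$ at infinity. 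The paper handles this by integrating $[\varphi(\alpha')]'=\sigma f(\alpha)l(\alpha')$ to get $\varphi(\alpha'(t))=\varphi(\alpha'(t_0))+\sigma\int_{t_0}^t f(\alpha)l(\alpha')$, and then bounding $\frac{1}{t}\int_{t_0}^t f(\alpha)l(\alpha')\le \frac{t-t_0}{t}f(\alpha(t))l(\alpha'(t))\le f(\alpha(t))l(\alpha'(t))$ by monotonicity; only the residual term $\varphi(\alpha'(t_0))/[f(\alpha(t_0))l(\alpha'(t_0))]$ is killed by $\varphi/l=o(1)$, since $\alpha'(t_0)=K^{-1}(\sigma F(\eps))\to0$ as $\sigma\to0$. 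You need this integration step; the pointwise claim as justified is unsupported.

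Second, and more seriously, your comparison and endgame do not work. You defer the comparison of a subsolution and a supersolution of the full inequality with gradient-dependent right-hand side — which is precisely the crux — to an unspecified "delicate argument." The paper never proves such a comparison: instead it proves a comparison principle only for the bare operator ($\Delta^\varphi u\ge\Delta^\varphi v$ plus boundary ordering implies $u\le v$), and reduces to it by looking at the positive maximum $\mu$ of $u-v$ on the annulus, observing that on the level set $\Gamma_\mu$ the gradients of $u$ and $v$ coincide, so the \emph{strict} monotonicity of $f$ gives $\Delta^\varphi u>\Delta^\varphi v$ there and hence on a neighbourhood, and then applying the operator comparison on the nested sets $\{u>v+\rho\}$ for $\rho\uparrow\mu$. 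Your endgame is also logically inverted: the comparison hypothesis forces $\eta\ge\sup_{\partial B_{t_0}(q_o)}u\to u(q_o)$ as $t_0\to0$, so you cannot subsequently send $\eta\to0$ unless you already know $u(q_o)=0$. Indeed, if your limiting argument were valid it would prove $u\equiv0$ even when $l(0)=0$, which is false ($u\equiv c>0$ solves the inequality when $l(0)=0$). The correct conclusion instead runs through the maximum principle: assuming $u$ non-constant, one finds a point $\tilde q$ where $u(\tilde q)>u^*-\eta$ while the supersolution, anchored at a ball where $\sup u\le u^*-2\eta$, is still $\le\eta$ at $\tilde q$, producing a positive interior maximum of $u-v$ and hence the contradiction above; the case $l(0)>0$ is only used at the very end to rule out nonzero constants.
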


  To show the sharpness of \eqref{generalized_K-O}, we produce a global unbounded subsolution of \eqref{filaplaciano}
  when \eqref{generalized_K-O} is violated. For simplicity we  only deal with the case of the $p$-Laplacian
   and we  prove the following:

\begin{theorem}\label{th_existence}
Assume the validity of \eqref{F} and \eqref{L}. Suppose that
\begin{equation}\label{PeL}\tag{$p$ \& $L$}
  \frac{t^{p-1}}{l(t)} = o(1) \quad \text{as }\ t \ra 0^+ \quad ,
  \quad l(t) \le B_1 + B_2 t^\mu \quad \forall \ t \in \erre_0^+,
\end{equation}
where $B_1, B_2>0 $ and $0 \le \mu < 1$. Assume also the relaxed
homogeneity condition
\begin{equation}\label{L2p}\tag{$L2_p$}
  l(t)s^p \leq \Lambda \,l(st) \qquad \forall \ s \in [0, 1], \  t \in \erre^+_0.
\end{equation}
 Then the following conditions are equivalent:

 \begin{itemize}
    \item [i)] there exists a non-negative, non-constant
        solution $u \in C^1(H^m)$ of inequality
        $\Delta^p_{H^m} u \ge f(u) l(\absh{\gradh{u}});$
    \item [ii)] $\displaystyle \frac{1}{K^{-1}(F(t))} \not\in
        L^1(+\infty)$.
 \end{itemize}
\end{theorem}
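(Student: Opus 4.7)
The biconditional has two separate implications.

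For $(i)\Rightarrow(ii)$, I would apply Theorem \ref{non_existence_th} contrapositively with $\varphi(t)=t^{p-1}$. Check first that the hypotheses transfer: \eqref{cond_ult_phi} holds with $\tau=p-1$, since $s\varphi'(st)=s^{p-1}\varphi'(t)$; \eqref{L2p} coincides with \eqref{L2} for this $\tau$; and \eqref{PeL} implies \eqref{non_integr_inf}, because $t\varphi'(t)/l(t)=(p-1)t^{p-1}/l(t)$ is $o(1)$ near $0^+$ (hence integrable there) and is bounded below by $c t^{p-1-\mu}$ at infinity with $p-1-\mu>-1$ (hence non-integrable). Theorem \ref{non_existence_th} then forces every non-negative $C^1$ solution to be constant whenever \eqref{generalized_K-O} holds, and the contrapositive is exactly $(i)\Rightarrow(ii)$.

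For $(ii)\Rightarrow(i)$, I would construct a non-constant $u\in C^1(H^m)$ of the form $u(q)=\alpha(|z(q)|)$, where $|z|=\sqrt{\sum_j(x_j^2+y_j^2)}$ is the horizontal Euclidean norm. A direct calculation, using that $\gradh{|z|}$ has unit Heisenberg norm and $\Delta_{H^m}|z|=(2m-1)/|z|$, yields the clean identities
\[
\absh{\gradh{u}}=\alpha'(|z|), \qquad \Delta^p_{H^m}u=\frac{1}{|z|^{2m-1}}\bigl(|z|^{2m-1}(\alpha'(|z|))^{p-1}\bigr)',
\]
with no density factor $\psi$ appearing. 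Setting $s=|z|$, the Heisenberg inequality reduces to the one-dimensional inequality $(s^{2m-1}(\alpha')^{p-1})'/s^{2m-1}\ge f(\alpha)l(\alpha')$, which is exactly the radial $p$-Laplacian inequality on $\erre^{2m}$. Away from $s=0$, I would shape $\alpha$ via the separable ODE $K(\alpha'(s))=F(\alpha(s))-F(\alpha_0)$ for some $\alpha_0>0$: differentiating gives $(p-1)(\alpha')^{p-2}\alpha''=f(\alpha)l(\alpha')$, so the inequality holds as an equality plus the non-negative radial term $(2m-1)(\alpha')^{p-1}/s$. The failure of \eqref{generalized_K-O} in hypothesis $(ii)$, namely $\int^\infty d\sigma/K^{-1}(F(\sigma)-F(\alpha_0))=+\infty$, ensures $\alpha(s)\to+\infty$ as $s\to\infty$.

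Near $s=0$ one has to complete $\alpha$ so that $\alpha'(0)=0$, which is what forces $u\in C^1(H^m)$ across the $t$-axis $\{|z|=0\}$. Here I would exploit the divergent radial term: a polynomial ansatz $\alpha(s)=\alpha_0+cs^{1+\beta}$ with $\beta\in(0,1/(p-1))$ makes the term $(2m-1)(\alpha')^{p-1}/s$ behave like $s^{\beta(p-1)-1}\to+\infty$ as $s\to 0^+$, which overrides the bounded quantity $f(\alpha)l(\alpha')$; a $C^1$-matching of this polynomial profile with the Keller-Osserman ODE profile at an intermediate radius $s_*>0$ produces the required global $\alpha$. The pointwise inequality on $\{|z|>0\}$ is then upgraded to a weak subsolution inequality on $H^m$ by testing against non-negative $\phi\in C^1_c(H^m)$, integrating by parts on $\{|z|>\varepsilon\}$ and letting $\varepsilon\to 0^+$; the boundary contribution involves $(\alpha'(\varepsilon))^{p-1}\to 0$ and therefore vanishes. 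The main obstacle will be exactly this $C^1$-matching near the $t$-axis, where the fine control provided by \eqref{PeL} on the behavior of $l$ near $0$ (and the compatibility with \eqref{L2p}) is what makes the global construction go through.
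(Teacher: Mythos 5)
Your direction $(i)\Rightarrow(ii)$ coincides with the paper's: it is the contrapositive of Theorem \ref{non_existence_th} with $\varphi(t)=t^{p-1}$, and your checks that \eqref{PeL} yields \eqref{non_integr_inf} and that \eqref{L2p} is \eqref{L2} for $\tau=p-1$ are correct. For $(ii)\Rightarrow(i)$ you also identify the paper's two main ingredients: radial stationary functions $u=\alpha(|z|)$, whose $p$-Laplacian carries no density factor, with an outer profile driven by $K^{-1}(F(\cdot))$ and an inner profile with $\alpha'(0)=0$ to secure $C^1$-regularity across the axis, followed by a weak-formulation check at the interfaces. But the construction is not closed: the $C^1$-gluing, which you correctly flag as ``the main obstacle,'' is asserted rather than carried out, and the specific scheme you sketch is problematic. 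Your outer profile $K(\alpha')=F(\alpha)-F(\alpha_0)$ determines $\alpha'$ as a function of $\alpha$, so matching it $C^1$ to the power profile at $s_*$ imposes the compatibility equation $K\bigl(c(1+\beta)s_*^{\beta}\bigr)=F\bigl(\alpha_0+cs_*^{1+\beta}\bigr)-F(\alpha_0)$; you give no argument that this is solvable compatibly with the other constraints (for small $c$ the left side is $o(c)$ by \eqref{PeL} while the right side is $\sim f(\alpha_0)s_*^{1+\beta}c$, so the two sides need not meet again). A telling symptom is that your sketch never uses $\mu<1$, which in the paper is the engine of the gluing.

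The paper's mechanism is different and avoids the matching equation altogether. It keeps the outer profile $w_\sigma$ with $w_\sigma'(0)=K^{-1}(\sigma F(\eps))>0$ (hence only Lipschitz at the axis), takes the inner profile from $[\varphi(\alpha')]'=\Theta$ with $\Theta=\varphi(1)/t_\sigma$, and instead of matching the two profiles directly it post-composes the outer one with a convex increasing $\gamma_\sigma$, $\gamma_\sigma'\le1$, chosen so that $\gamma_\sigma\circ w_\sigma$ fits $C^1$ onto the inner profile at $|z|=t_\sigma$. The price is the factor $(\gamma_\sigma')^{p-1}\ge\bigl[K^{-1}(\sigma F(2\eps))\bigr]^{-(p-1)}$ in the chain-rule estimate $\Delta^p_{H^m}(\gamma_\sigma\circ u_2)\ge(\gamma_\sigma')^{p-1}\sigma f(u_2)\,l(\absh{\gradh{u_2}})$, and this loss is absorbed by the free large parameter $\sigma$: the bound $l(t)\le B_1+B_2t^{\mu}$ gives $K(t)\asymp t^{p-\mu}$, hence $\sigma/[K^{-1}(\sigma F(2\eps))]^{p-1}\to+\infty$ as $\sigma\to+\infty$ precisely because $(p-1)/(p-\mu)<1$, i.e. $\mu<1$; the $C$-monotonicity of $l$ handles $l(\gamma_\sigma'\absh{\gradh{u_2}})$ versus $l(\absh{\gradh{u_2}})$. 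Your scheme could likely be repaired (for instance by freeing the additive constant, $K(\alpha')=F(\alpha)-E$ with $E=F(\alpha(s_*))-K(\alpha'(s_*))$, and then checking that $1/K^{-1}(F-E)\notin L^1(+\infty)$ still follows from $(ii)$), but as written the existence half of the theorem is not proved.
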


\vspace{0.4cm} As for Theorem \ref{non_existence_th}, we can state
the analogous result in Euclidean setting: in this latter case,
assumption \eqref{L2p} is unnecessary.
We would like to stress that the subsolution constructed to prove
the necessity part of the Keller-Osserman condition is unbounded.
This fact is not accidental: indeed, in Section
\ref{sect_nonexistofbounded} we shall prove that, under all the
assumptions of Theorem \ref{non_existence_th} but
\eqref{generalized_K-O}, bounded subsolutions still have to be
constant.

 In the last part of the paper we show how the techniques
introduced  can be implemented
  to study differential inequalities of the form
  \begin{equation}\label{equaz_col_meno}
    \Delta^\varphi_{H^m}u \geq f(u) - h(u)g\pa{\absh{\gradh{u}}},
  \end{equation}
  where the functions appearing in the RHS of the above are
  non-negative. The main results obtained are Theorem
  \ref{TH_non_exist_meno}, that is, triviality of the solutions
  in the general setting under an appropriate Keller-Osserman condition, and Theorem \ref{esistenzameno} for the
  $p$-Laplace operator, where we show the sharpness of the condition in analogy with Theorem \ref{th_existence}.
  Details appear in Section \ref{more} below.

\end{section}

\begin{section}{Preliminaries}\label{sect_foreplay}
The aim of this section is to introduce an explicit formula for
the $\varphi$-Laplacian acting on radial functions and the
appropriate notion of weak solution of differential inequalities
of the type of \eqref{filaplaciano} or, more generally,
\eqref{equaz_col_meno}.

\begin{subsection}{``Radialization'' of the
$\varphi$-Laplacian}\label{sect_radial}

Consider a \emph{radial} function, that is, a function of the form

\begin{equation}
  u(q) = \alpha(r(q)), \qquad q \in H^m,
\end{equation}
 where $\alpha : \erre_0^+ \ra
\erre, \, \alpha \in C^2(\erre_0^+$).

Now, a straightforward but somewhat lengthy computation yields the
expression:
\begin{equation}\label{radializz_deltaphi}
  \Delta^\varphi_{H^m}u = \sqrt{\psi}\sq{\sqrt{\psi}\varphi'\pa{|\alpha'(r)|\sqrt{\psi}}\alpha''(r) +
  \frac{2m+1}{r} \operatorname{sgn}\alpha'(r) \,\varphi\pa{|\alpha'(r)|\sqrt{\psi}}}.
\end{equation}
It is worth to stress the following property, which allows us to
shift the origin for the Koranyi distance from $o$ to any other
point $q_0$: if we denote with $\bar{r}(q)= d(q_0, q) =
r(q_0^{-1}\circ q)$, a calculation shows that
\[
\sq{X_j(\bar{r})}(q)= \sq{X_j(r)}(q_0^{-1} \circ q), \qquad
\sq{Y_j(\bar{r})}(q)= \sq{Y_j(r)}(q_0^{-1} \circ q),
\]
hence we obtain the invariance with respect to the left
multiplication
\begin{equation}\label{philapl_rsegnato}
\Delta^\varphi_{H^m}\pa{\alpha \circ \bar{r}}(q)=
\Delta^\varphi_{H^m}(\alpha \circ r)(q_0^{-1} \circ q).
\end{equation}
The above relation will come in handy in what follows.
\end{subsection}

\begin{subsection}{Weak formulation}\label{sotsezdue}

In this section we derive a weak formulation for the differential
inequality \eqref{filaplaciano}. In order to simplify the
notation, let us first introduce the function
\begin{equation}\label{At}
 A(t) = t^{-1}\varphi(t), \qquad A(t) \in
C^0(\erre^+).
\end{equation}
With the help of the matrix $B=B(q)$ (see \cite{BRS}, pg. 294),
defined by

\begin{displaymath}\label{matrice}
B(q)=B(z,t)= \left(
\begin{array}{c|c}
I_{2m}&\begin{array}{c} 2y_1\\\vdots\\2y_m\\-2x_1\\ \vdots\\-2x_m
\end{array}\\  \hline
\begin{array}{cccccc}
2y_1 & \cdots & 2y_m & -2x_1 & \cdots & -2x_m
\end{array}&
4\abs{z}^2
\end{array}
\right),
\end{displaymath}
we can write the $\varphi$-Laplacian in divergence form. Indeed,
indicating from now on with $\operatorname{div}$, $\nabla$  and
$\pair{\phantom{a},\phantom{b}}$ respectively the ordinary
Euclidean divergence, gradient and scalar product in
$\erre^{2m+1}$, given $u \in C^2(H^m)$ we have
\begin{align*}
  \Delta^\varphi_{H^m}u  =& \sum_j\sq{
  X_j\pa{A(\absh{\gradh{u}})X_j u} + Y_j\pa{A(\absh{\gradh{u}})Y_j
  u}}=\\
  =&\sum_j[A(\absh{\gradh{u}})X_j(X_ju) +
  X_j(A(\absh{\gradh{u}}))X_ju + \\
  &\phantom{\sum[}+ A(\absh{\gradh{u}})Y_j(Y_ju) +
  Y_j(A(\absh{\gradh{u}}))Y_ju ]= \\ =&
  A(\absh{\gradh{u}})\operatorname{div}(B \nabla u) +
  \gradh{A(\absh{\gradh{u}})} \cdot \gradh{u},
\end{align*}
where with $B\nabla v$ we mean the vector in $\erre^{2m+1}$ whose
components in the standard basis $\partder{x_j},
\partder{y_j}, \partder{t}$ are given by the matrix multiplication
of $B$ with the components of $\nabla v$ in the same basis. Having
made this precise, it is easy to see that $B \nabla v =
\gradh{v}$.
 Now, a standard check shows that, for
$u, v \in C^1(H^m)$,
\begin{equation}
  \pair{\nabla u, B \nabla v} = \gradh{u} \cdot \gradh{v}.
\end{equation}
Then, going back to the previous computation we have
\begin{align*}
  \Delta^\varphi_{H^m}u  &= A(\absh{\gradh{u}})\operatorname{div}(B \nabla u) +
  \pair{\nabla A(\absh{\gradh{u}}),B\nabla u}=\\
  &= \operatorname{div}\pa{A(\absh{\gradh{u}})B\nabla u},
\end{align*}
which is the desired expression. Note that, when $\varphi(t)=t$,
the above becomes the well-known formula (see, e.g., \cite{GALA}
and \cite{BRS}) for the Kohn-Spencer Laplacian, that is,
$\Delta^\varphi_{H^m}u = \operatorname{div}(B\nabla u).$ It
follows that \eqref{filaplaciano} can be interpreted in the weak
sense as follows: for all $\zeta \in C^\infty_0(H^m)$, $\zeta \geq
0$, we have
\begin{align*}
  \int_{\erre^{2m+1}}\zeta \Delta^\varphi_{H^m}u &= \int_{\erre^{2m+1}}\zeta
  \operatorname{div}(A(\absh{\gradh{u}})B\nabla u)= \\ &= -
  \int_{\erre^{2m+1}}A(\absh{\gradh{u}})\pair{B\nabla u,
  \nabla\zeta}= \\ &= - \int_{\erre^{2m+1}}A(\absh{\gradh{u}})
  \gradh{u} \cdot \gradh{\zeta},
\end{align*}
and thus the weak form is
\begin{equation}\label{weak_form}
  -\int_{\erre^{2m+1}}A(\absh{\gradh{u}}) \gradh{u} \cdot
  \gradh{\zeta} \geq \int_{\erre^{2m+1}}f(u)
  l(\absh{\gradh{u}})\zeta
\end{equation}
as expected. Hence, an \emph{entire weak classical solution of
\eqref{filaplaciano}} is a function $u \in C^1(H^m)$ such that,
for all $\zeta \in C_0^{\infty}(H^m), \zeta \geq 0$,
\eqref{weak_form} is satisfied. A similar definition of course
holds for the differential inequality \eqref{equaz_col_meno}.
\end{subsection}
\end{section}

\begin{section}{Proof of Theorem \ref{non_existence_th}}\label{sect_mainresult}
 In order to prove Theorem \ref{non_existence_th} we shall need a comparison theorem
and a maximum principle which are well-known for the Kohn-Spencer
Laplacian (see \cite{BON}). Here we briefly prove the
corresponding statements for the $\varphi$-Laplacian that we shall
use below, basing on ideas taken from \cite{PRS} and \cite{PUSE}.
Throughout Subsections \ref{subsect_comp} and \ref{subsect_max} we
shall assume \eqref{Phi} and \eqref{cond_ult_phi}.

\begin{subsection}{Comparison principle}\label{subsect_comp}
  \begin{proposition}\label{comparison}
    Let $\Omega \subset \subset H^m$  be a
    relatively compact domain with $C^1$ boundary. Let $u, v \in
    C^0(\overline{\Omega}) \cap C^1(\Omega)$ satisfy

    \begin{equation}\label{comp}
      \begin{cases}
       \Delta^\varphi_{H^m}u \geq \Delta^\varphi_{H^m}v  &\text{on } \Omega\\
       u \leq v &\text{on } \partial \Omega.
      \end{cases}
    \end{equation}
    Then $u \leq v$ on $\Omega$.

    \begin{proof}
    The proof basically follows the one in \cite{PRS_MEM}
    pp.85--86. However, we reproduce the steps for the sake of
    completeness.  Let $w=v-u.$ By contradiction assume that there exists $\bar{q}\in\Omega$
    such that $w(\bar{q})<0$, and let $\eps>0$ be such that
    $w(\bar{q})+\eps<0$. The function $w_\eps=\min\{w+\eps,0\}$ has
    compact support in $\Omega$, hence $-w_\eps\geq0$ is an
    admissible  Lipschitz test function. The weak definition of \eqref{comp}, together with the
    divergence form of $\Delta^\varphi_{H^m}$, reads:
    \begin{align}\label{dish1}
      0&\geq\int_\Omega\pair{\abs{\gradh{v}}_{H^m}^{-1}\varphi(\absh{\gradh{v}})B\nabla v-
      \abs{\gradh{u}}_{H^m}^{-1}\varphi(\absh{\gradh{u}})B\nabla u, \nabla
      w_\eps}=\nonumber\\
      =&\int_E\pair{\abs{\gradh{v}}_{H^m}^{-1}\varphi(\absh{\gradh{v}})B\nabla v-
      \abs{\gradh{u}}_{H^m}^{-1}\varphi(\absh{\gradh{u}})B\nabla u,
      \nabla(v-u)},
    \end{align}
    where $E=\set{q:w(q)<-\eps}$.
    We denote with $h$ the integrand in \eqref{dish1}. With the aid of the
    Cauchy-Schwarz inequality we have
    \begin{equation}\label{integranda_pos}
      h\geq\sq{\varphi(\absh{\gradh{v}})-\varphi(\absh{\gradh{u}})}(\absh{\gradh{v}}-\absh{\gradh{u}})\geq0,
    \end{equation}
    where the latter inequality is due to the monotonicity of
    $\varphi$.\\
    It follows from \eqref{dish1} and \eqref{integranda_pos} that $0\geq\int_\Omega
    h\geq0$, hence $h=0$ a.e. on $\Omega.$\\
    This implies that $\absh{\gradh{u}}=\absh{\gradh{v}}$ on
    $E$, and therefore
    \begin{align*}
    0=& h=\abs{\gradh{u}}_{H^m}^{-1}\varphi(\absh{\gradh{u}})\pair{B\nabla(v-u),\nabla(v-u)}=\\
    =&\abs{\gradh{u}}_{H^m}^{-1}\varphi(\absh{\gradh{u}})\abs{\nabla(v-u)}_{H^m}^2.
    \end{align*}
    This shows that
    \begin{equation}\label{weps}
      \abs{\nabla(w_\eps)}_{H^m}^2=0,
    \end{equation}
    whence $w_\eps$ is constant. Indeed, from \eqref{weps} we have
    $X_j(w_\eps)=Y_j(w_\eps)=0$ for every $j=1\ldots m$, and using
    the commutation law \eqref{commutation} we also have $\partial w_\eps/\partial
    t=0;$ recalling the definition of $X_j$ and $Y_j$, all the components
    of the Euclidean gradient of $w_\eps$ vanish, proving the
    constancy of $w_\eps$. Since
    $w_\eps(\bar{q})<0={w_\eps}_{\mid\partial\Omega}$ we reach the
    desired contradiction.
    \end{proof}
  \end{proposition}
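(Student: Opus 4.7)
The plan is to argue by contradiction in the classical style of weak comparison arguments for quasilinear operators of $p$-Laplacian type; the adaptation to the Heisenberg setting adds only one extra step at the end. Suppose there exists $\bar q \in \Omega$ with $w(\bar q) := v(\bar q) - u(\bar q) < 0$, and choose $\eps > 0$ so small that $w(\bar q) + \eps < 0$. Since $w \ge 0$ on $\partial\Omega$, the function $w_\eps = \min\{w + \eps, 0\}$ is Lipschitz and compactly supported in $\Omega$, so $-w_\eps \ge 0$ is an admissible Lipschitz test function in the weak formulation \eqref{weak_form} applied to the difference $\Delta^\varphi_{H^m} v - \Delta^\varphi_{H^m} u \le 0$.

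First I would substitute $-w_\eps$ into the weak inequality. Using the divergence form derived in Section 2, together with the fact that $\nabla w_\eps = \nabla(v-u)$ on the open set $E = \{q \in \Omega : w(q) < -\eps\}$ and $\nabla w_\eps = 0$ elsewhere, the inequality reduces to $0 \ge \int_E h$, where
\[
h = \pair{A(\absh{\gradh v})\, B\nabla v - A(\absh{\gradh u})\, B\nabla u,\ \nabla(v-u)},
\]
with $A(t) = \varphi(t)/t$. The central analytic step is to show $h \ge 0$ pointwise: expanding via the identity $\pair{\nabla f, B \nabla g} = \gradh f \cdot \gradh g$ and applying the Cauchy-Schwarz inequality for the horizontal dot product, one obtains
\[
h \ge \sq{\varphi(\absh{\gradh v}) - \varphi(\absh{\gradh u})}\pa{\absh{\gradh v} - \absh{\gradh u}} \ge 0,
\]
the last inequality being the strict monotonicity of $\varphi$ guaranteed by \eqref{Phi}. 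Combining $\int_E h \le 0$ with $h \ge 0$ forces $h \equiv 0$ a.e. on $E$. Tracing the inequalities backwards, this yields first $\absh{\gradh v} = \absh{\gradh u}$ on $E$, and then $A(\absh{\gradh u})\,\absh{\gradh(v-u)}^2 = 0$, so that $\absh{\gradh w_\eps} = 0$ a.e.\ on $E$ (the case $\absh{\gradh u}=0$ is harmless, as $w_\eps$ is Lipschitz and $\nabla w_\eps = 0$ on the zero set of $\gradh u = \gradh v$ too).

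The last, and only genuinely Heisenberg-specific, step is to upgrade the vanishing of the horizontal gradient to the vanishing of the full Euclidean gradient of $w_\eps$. Since $X_j w_\eps = Y_j w_\eps = 0$ for all $j$, the commutation relation \eqref{commutation} gives $[X_j, Y_j] w_\eps = -4\,\partial w_\eps/\partial t = 0$, and then the explicit expressions $X_j = \partial/\partial x_j + 2y_j\,\partial/\partial t$ and $Y_j = \partial/\partial y_j - 2x_j\,\partial/\partial t$ imply $\partial w_\eps/\partial x_j = \partial w_\eps/\partial y_j = 0$ as well. Hence $w_\eps$ is constant on the connected component of $\{w_\eps < 0\}$ containing $\bar q$. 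But $w_\eps(\bar q) = w(\bar q) + \eps < 0$, while $w_\eps \equiv 0$ on $\partial \{w_\eps<0\} \cap \Omega$ (by definition of $w_\eps$) and on $\partial \Omega$, yielding the desired contradiction.

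The hard part is the pointwise sign of $h$: everything hinges on interpreting the "ordinary" Euclidean pairing $\pair{\cdot, B\cdot}$ as the horizontal dot product so that both Cauchy-Schwarz and the monotonicity of $\varphi$ can be applied simultaneously; the final commutator argument, though conceptually important, is then just a bookkeeping step enabled by the bracket structure of $H^m$.
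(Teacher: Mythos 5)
Your proposal is correct and follows essentially the same route as the paper's proof: the test function $\min\{w+\eps,0\}$, the pointwise sign of the integrand via Cauchy--Schwarz for the horizontal product and the monotonicity of $\varphi$, and the commutator argument upgrading $X_jw_\eps=Y_jw_\eps=0$ to constancy. Your explicit handling of the degenerate case $\absh{\gradh{u}}=0$ is a small point the paper leaves implicit, but otherwise the two arguments coincide.
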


\end{subsection}

\vspace{0.4cm}
\begin{subsection}{Maximum principle}\label{subsect_max}
  \begin{proposition}\label{maximum_principle}
    Let $\Omega \subset H^m$  be a
     domain. Let $u \in C^0(\overline{\Omega}) \cap C^1(\Omega)$ satisfy
    \begin{equation}\label{subfi}
      \Delta^\varphi_{H^m}u \geq 0 \quad \text{in}\,\,
      \Omega
    \end{equation}
    and let $\displaystyle u^*=\sup_\Omega u$.
    If $u(q_M)=u^*$ for some $q_M \in \Omega$, then $u \equiv u^*.$
    \end{proposition}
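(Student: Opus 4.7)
The plan is to argue by contradiction using the standard connectedness plus Hopf-barrier scheme, combining the just-proven Proposition \ref{comparison} with an explicit radial supersolution.

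Set $\Omega^{*} = \{q \in \Omega \tc u(q) = u^{*}\}$; by continuity this set is relatively closed in $\Omega$ and by hypothesis it is nonempty. Since $\Omega$ is a domain, it suffices to show $\Omega^{*}$ is open. Suppose not: a routine geometric step---take $\tilde q \in \Omega \setminus \Omega^{*}$ with $d(\tilde q, \Omega^{*}) < d(\tilde q, \partial \Omega)$, then enlarge the Koranyi ball about $\tilde q$ until its boundary first meets $\Omega^{*}$---yields $q_1 \in \Omega$, $\rho > 0$ with $\overline{B_\rho(q_1)} \subset \Omega$, $B_\rho(q_1) \cap \Omega^{*} = \emptyset$, and a touching point $q_0 \in \partial B_\rho(q_1) \cap \Omega^{*}$. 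By compactness, pick $\eta > 0$ with $u \le u^{*} - \eta$ on $\overline{B_{\rho/2}(q_1)}$ and set $A = B_\rho(q_1) \setminus \overline{B_{\rho/2}(q_1)}$.

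On $\overline A$ I would construct the radial barrier
$$v(q) = u^{*} - \eps\bigl(e^{-\gamma \bar r(q)^2} - e^{-\gamma\rho^2}\bigr), \qquad \bar r(q) := d(q, q_1),$$
with $\gamma$ large and $\eps$ small. Using left-invariance \eqref{philapl_rsegnato} and the radialization formula \eqref{radializz_deltaphi} (centered at $q_1$), $\Delta^\varphi_{H^m}v$ splits as $\sqrt{\bar\psi}$ times a bracket with a negative concavity piece coming from $\alpha''<0$ (where $\alpha(t) = u^{*} - \eps(e^{-\gamma t^2} - e^{-\gamma\rho^2})$, so that $\alpha''(t) = 2\eps\gamma e^{-\gamma t^2}(1 - 2\gamma t^2) < 0$ on $[\rho/2,\rho]$ once $\gamma$ is large) and a positive drift piece $\tfrac{2m+1}{\bar r}\varphi(\alpha'\sqrt{\bar\psi})$. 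Integrating the structural bound \eqref{cond_ult_phi} gives $\tau\,\varphi(t) \le D\,t\,\varphi'(t)$, from which one checks that for $\gamma$ large enough the concavity piece dominates the drift on $\{\bar\psi > 0\} \cap A$; on the characteristic set $\{\bar\psi = 0\}$ the outer factor $\sqrt{\bar\psi}$ kills the whole bracket and $\Delta^\varphi_{H^m}v \le 0$ holds trivially. Fixing such $\gamma$, shrink $\eps$ so that $v \ge u^{*} - \eta \ge u$ on $\partial B_{\rho/2}(q_1)$; trivially $v = u^{*} \ge u$ on $\partial B_\rho(q_1)$.

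Proposition \ref{comparison} applied to $(u,v)$ on $A$ then gives $u \le v$ on $\overline A$, with equality at $q_0$. Since $u$ attains its supremum at the interior point $q_0 \in \Omega$ and $u \in C^1(\Omega)$, the Euclidean gradient $\nabla u(q_0) = 0$. Take a $C^1$ curve $\sigma:[0,s_0) \to \overline{B_\rho(q_1)}$ with $\sigma(0) = q_0$ and $\sigma'(0) = -\nabla\bar r(q_0)/|\nabla\bar r(q_0)|^2$ (the Euclidean $\nabla\bar r(q_0)$ is nonzero since $q_0 \neq q_1$); then $\bar r(\sigma(s)) = \rho - s + o(s)$ and $\sigma(s) \in A$ for small $s > 0$. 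From $v - u \geq 0$ on $\overline A$ and $(v - u)(q_0) = 0$,
$$0 \le \tfrac{d}{ds}(v - u)(\sigma(s))\Big|_{s = 0^+} = \pair{\nabla v(q_0), \sigma'(0)} = -\alpha'(\rho) = -2\eps\gamma\rho\,e^{-\gamma\rho^2} < 0,$$
a contradiction. The main obstacle is verifying the barrier inequality $\Delta^\varphi_{H^m}v \le 0$ uniformly on $A$---choosing $\gamma$ and $\eps$, and especially handling the degeneracy on the characteristic set $\{\bar\psi = 0\}$---using only the structural hypothesis \eqref{cond_ult_phi}.
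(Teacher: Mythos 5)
Your overall architecture is the same as the paper's: an interior tangent Koranyi ball, a radial barrier on the annulus $B_\rho(q_1)\setminus\overline{B_{\rho/2}(q_1)}$, an appeal to Proposition \ref{comparison}, and a contradiction at the touching point $q_0$ obtained from $\nabla u(q_0)=0$ together with the fact that the Euclidean gradient of $\bar r$ vanishes only at the center (the paper phrases this as $\langle\nabla v,\nabla\bar r\rangle\ge\lambda>0$ on the boundary of the annulus; your curve $\sigma(s)$ is an equivalent formulation). The one substantive difference is the barrier: the paper takes the exact solution of the radial ODE, $z(t)=\int_{R/2}^{t}\varphi^{-1}\pa{c\,s^{-(2m+1)}}\,\mathrm{d}s+a$, so that $[\varphi(z')]'+\tfrac{2m+1}{t}\varphi(z')\le 0$ holds by construction and no largeness parameter is needed, whereas you take the classical exponential barrier and must make the concavity term beat the drift term for $\gamma$ large.

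It is in that verification that your argument has a genuine gap. The inequality you extract from \eqref{cond_ult_phi}, namely $\tau\varphi(t)\le Dt\varphi'(t)$, is correctly derived (integrate $\varphi'(st)\le Ds^{\tau-1}\varphi'(t)$ in $s$ over $[0,1]$), but it is vacuous when $\tau=0$, a value that \eqref{cond_ult_phi} explicitly allows and which for some admissible $\varphi$ is the \emph{only} possible choice (e.g.\ $\varphi(t)=\log(1+t)$ satisfies \eqref{cond_ult_phi} only with $\tau=0$). Writing $\sigma=\alpha'(\bar r)\sqrt{\psi}$, what your barrier needs wherever $\psi>0$ is
\begin{equation*}
\sqrt{\psi}\,\varphi'(\sigma)\,|\alpha''|\ \ge\ \frac{2m+1}{\bar r}\,\varphi(\sigma),
\end{equation*}
i.e.\ a uniform bound $\varphi(\sigma)\le C\,\sigma\varphi'(\sigma)$ for $\sigma$ near $0$, since $\sigma\to0$ as one approaches the characteristic set. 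For $\tau>0$ your inequality supplies exactly this with $C=D/\tau$; the degenerate factors $\sqrt{\psi}\,\varphi'(\sigma)$ cancel and the requirement reduces to $|\alpha''|/\alpha'=(2\gamma\bar r^{2}-1)/\bar r\ge (2m+1)D/(\tau\bar r)$, achievable uniformly on $[\rho/2,\rho]$ by a large fixed $\gamma$ and independent of $\eps$ --- that part is clean and correct. For $\tau=0$, however, there are $\varphi$ satisfying \eqref{Phi} and \eqref{cond_ult_phi} with $\varphi(\sigma)/(\sigma\varphi'(\sigma))\to+\infty$ as $\sigma\to0^{+}$ (take $\varphi'(\sigma)$ comparable to $\sigma^{-1}(\log(1/\sigma))^{-2}$ near $0$), and then no fixed $\gamma$ makes the bracket nonpositive arbitrarily close to the characteristic set; the prefactor $\sqrt{\psi}$ only rescues you \emph{on} $\{\psi=0\}$, not nearby. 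So as written your proof covers the $p$-Laplacian (choose $\tau=p-1>0$) and any $\varphi$ admitting a positive $\tau$, but not the proposition in full generality; to close the gap you must either assume $\tau>0$ or switch to the paper's ODE barrier, whose transfer from the Euclidean radial ODE to $\Delta^\varphi_{H^m}$ via \eqref{cond_ult_phi} is itself the delicate step of the published argument.
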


\begin{proof}

By contradiction, suppose the existence of a solution $u$ of
\eqref{subfi} and of $q_M\in\Omega$ such that $u(q_M)=u^*$, but
$u\not\equiv u^*$. Set $\Gamma = \set{q\in \Omega: u(q) = u^*}$.
Let $\delta>0$ and define
\begin{equation}
   \Omega^+=\set{q\in\Omega:u^*-\delta<u(q)<u^*}; \qquad
   \Gamma_\delta = \set{q\in \Omega: u(q) = u^*-\delta};
\end{equation}
note that $\partial\Omega^+\cap\Omega= \Gamma \cup \Gamma_\delta$.
Let $q'\in\Omega^+$ be such that
\begin{equation}
  d\pa{q',\Gamma}<d(q',\Gamma_\delta), \qquad d(q',\Gamma)<d(q',\partial \Omega)
\end{equation}
(this is possible up to choosing $q'$ sufficiently close to
$q_M$). Let $B_R(q')$ be the largest Koranyi ball centered at $q'$
and contained in $\Omega^+$. Then, by construction $u<u^*$ in
$B_R(q')$ while $u(q_0)=u^*$ for some $q_0\in\partial B_R(q').$
Since $q_0$ is an absolute maximum for $u$ in $\Omega$, we have
$\nabla u(q_0)=0.$
%By the smoothness and convexity
%***\textbf{CONTROLLARE}*** of the Koranyi balls, we can find
%$q'\in B(q_1)$ near $q_0$ and $R>0$ such that $q_0 \in
%\partial B_{R}(q')$ and $B_{R}(q') \subset B(q_1)$. %%
%Consider the shifted function
%\[
%\tilde{u}(q)= u(q' \circ q);
%\]
%note that, by \eqref{philapl_rsegnato},
%\[
%\Delta^\varphi_{H^m}\tilde{u} \geq 0 \qquad \text{on the set }\,\,
%\widetilde{\Omega}= q'^{-1} \circ \Omega,
%\]
%and that $o \in \widetilde{\Omega}$.

Now we construct an auxiliary function. Towards this aim, we
consider the annular region
\begin{equation}
  E_R(q')=\overline{B_R(q')}\setminus B_{R/2}(q')\subset\Omega^+;
\end{equation}
we fix  $a\in(u^*-\delta,u^*)$ to be determined later and consider
the following problem
\begin{equation}
  \left\{\begin{array}{lll}\label{sist}
  \sq{\varphi\pa{z'}}'+\frac{2m+1}{t}\varphi\pa{z'}\leq0\quad\text{in }(R/2,R)\\
  z(R/2)=a,\quad z(R)=u^*\\
  u^*-\delta<z\leq u^*,\quad z'>0\quad\text{in }[R/2,R].
\end{array}\right.
\end{equation}
Notice that, for example, the function
\begin{equation}
  z(t)=\int_{R/2}^t\varphi^{-1}\pa{\frac{c}{s^{2m+1}}}\,\mathrm{d}s+a
\end{equation}
satisfies \eqref{sist} for some suitable constant $c$.\\
Using the invariance property \eqref{philapl_rsegnato}, such a
function gives rise to a $C^2$-solution $v(q)=z(\bar{r}(q))$,
where $\bar{r}(q)= r(q'^{-1} \circ q)$, of
\begin{equation}\label{sistv}
  \left\{\begin{array}{lll}
  \Delta^\varphi_{H^m}v\leq0\quad\text{in }E_R(q')\\
  v=a \quad\text{on }\partial B_{R/2}(q'),\quad v=u^*\quad \text{on }\partial B_{R}(q') \\
  u^*-\delta<v\leq u^*.
\end{array}\right.
\end{equation}
Indeed by hypothesis \eqref{cond_ult_phi} we have
\begin{equation}
  \Delta^\varphi_{H^m}v\leq
  D\pa{\sqrt{\psi}}^{\tau+1}\set{\sq{\varphi\pa{z'}}'+\frac{2m+1}{t}\varphi\pa{z'}}\leq0.
\end{equation}
It is important to point out that there exists a positive constant
$\lambda >0$ such that
\begin{equation}
  \langle\nabla v,\nabla \bar{r}\rangle=z'(\bar{r})\abs{\nabla \bar{r}}^2 \geq \lambda >0 \quad \text{on }\,\, \partial E_R(q');
\end{equation}
this follows since $\bar{r}$ differs from $r$ by a translation of
the Heisenberg group (that is, a diffeomorphism), and $\abs{\nabla
r}^2 = \frac{1}{r^6}\pa{|z|^6 + \frac{t^2}{4}}$ only vanishes at
the origin $o$. Next we choose $a\in\pa{u^*-\delta,u^*}$ close
enough to $u^*$ so that
  $u\leq v$ on $\partial B_{R/2}\pa{q'}$: this is possible since $\partial B_{R/2}\pa{q'}\subset
  \subset \Omega^+$ and thus $\max_{\partial
B_{R/2}\pa{q'}}u<u^*$. Now $u,v\in C^0(\overline{E_R(q')}) \cap
C^1(E_R(q'))$ and, since $v\equiv u^*$ on $\partial B_{R}\pa{q'},$
they satisfy
\begin{equation}
  \begin{cases}
     \Delta^\varphi_{H^m}u \geq \Delta^\varphi_{H^m}v  &\text{on }E_R\pa{q'}\\
     u \leq v &\text{on }\partial E_R\pa{q'}.
   \end{cases}
 \end{equation}
Then by Proposition \ref{comparison} we have $u\leq v$ on $E_R\pa{q'}.$\\
Let us consider the function $v-u$: it satisfies $v-u\geq0$ on
$E_R(q')$ and $v(q_0)-u(q_0)=u^*-u^*=0$, so that
$\langle\nabla(v-u),\nabla \bar{r}\rangle(q_0)\leq 0.$ Therefore
\begin{equation}
  0=\langle\nabla u,\nabla \bar{r}\rangle(q_0)\geq\langle\nabla v,\nabla \bar{r}\rangle(q_0)>0,
\end{equation}
a contradiction.
\end{proof}

\begin{remark}
  \emph{Obviously, one can state an analogous minimum principle using
  the substitution $v(q) = -u(q)$; however, a direct proof of the
  minimum principle following the above steps reveals some further
  difficulties due to the density function, which is not bounded
  from below away from zero.}
\end{remark}

\end{subsection}

\begin{subsection}{Construction of the supersolution}
In order to construct the radial supersolution for
\eqref{filaplaciano} we  point out the validity of the next
technical Lemma. We refer to the Introduction for notations and
properties.
  \begin{lemma}\label{sigma_lemma}
    Let $\sigma \in (0, 1]$; then the generalized
    Keller-Osserman condition \eqref{generalized_K-O} implies
    \begin{equation}
      \frac{1}{K^{-1}(\sigma F(t))} \in L^1(+\infty).
    \end{equation}
    \begin{proof}
      We perform the  change of variables $t= s\sigma$ to have
      \[
      \int^{+\infty}\frac{\mathrm{d}s}{K^{-1}(\sigma F(s))}= \sigma^{-1}\int^{+\infty}\frac{\mathrm{d}t}{K^{-1}(\sigma
      F(\sigma^{-1}t))}.
      \]
      Since $f$ and $K^{-1}$ are increasing by assumption, we
      get
      \[
      F(\sigma^{-1}t)=\int_0^{\sigma^{-1}t} f(z)\,\mathrm{d}z = \sigma^{-1}\int_0^t
      f(\sigma^{-1}\xi)\,\mathrm{d}\xi \geq \sigma^{-1}\int_0^t
      f(\xi)\,\mathrm{d}\xi = \sigma^{-1}F(t)
      \]
      and
      \[
      K^{-1}(\sigma F(\sigma^{-1}t)) \geq K^{-1}(F(t)),
      \]
      thus
      \begin{equation}\label{sigmalemma2}
      \int^{+\infty}\frac{\mathrm{d}s}{K^{-1}(\sigma F(s))}\leq
      \sigma^{-1}\int^{+\infty} \frac{\mathrm{d}t}{K^{-1}(F(t))}<+\infty.
      \end{equation}
    \end{proof}
  \end{lemma}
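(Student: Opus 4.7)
My plan is to reduce the integral $\int^{+\infty}\frac{ds}{K^{-1}(\sigma F(s))}$ to the one known to converge by exploiting the monotonicity assumed in \eqref{F} together with the monotonicity of $K^{-1}$. Since $\sigma \le 1$, the natural attempt is the rescaling $t = \sigma s$, which turns the integral into $\sigma^{-1}\int^{+\infty}\frac{dt}{K^{-1}\bigl(\sigma F(\sigma^{-1}t)\bigr)}$, and then the whole question becomes whether we can control $\sigma F(\sigma^{-1}t)$ from below by $F(t)$.

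The key inequality I expect to prove is
\[
\sigma F(\sigma^{-1}t) \;\ge\; F(t), \qquad \forall\, t \ge 0.
\]
This should follow from \eqref{F}: writing $F(\sigma^{-1}t)=\int_0^{\sigma^{-1}t} f(z)\,dz$ and substituting $z=\sigma^{-1}\xi$ gives $F(\sigma^{-1}t) = \sigma^{-1}\int_0^t f(\sigma^{-1}\xi)\,d\xi$. Since $\sigma^{-1}\ge 1$ and $f$ is non-decreasing on $\erre_0^+$, we have $f(\sigma^{-1}\xi)\ge f(\xi)$ pointwise, giving $F(\sigma^{-1}t)\ge \sigma^{-1}F(t)$, i.e.\ the desired bound.

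Once this is in hand, the monotonicity of $K^{-1}$ yields $K^{-1}(\sigma F(\sigma^{-1}t))\ge K^{-1}(F(t))$, so that after the change of variable
\[
\int^{+\infty}\frac{ds}{K^{-1}(\sigma F(s))} \;\le\; \sigma^{-1}\int^{+\infty}\frac{dt}{K^{-1}(F(t))},
\]
and the right-hand side is finite by the generalized Keller-Osserman assumption \eqref{generalized_K-O}. The whole argument is essentially bookkeeping; the only subtle point is to make sure all monotonicities (of $f$, hence of $F$, and of $K^{-1}$) are used in the correct direction, and that the factor $\sigma^{-1}$ coming out of the change of variable is harmless since it is a fixed finite constant.
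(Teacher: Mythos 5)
Your proposal is correct and follows exactly the paper's argument: the same change of variables $t=\sigma s$, the same key inequality $F(\sigma^{-1}t)\ge\sigma^{-1}F(t)$ derived from the monotonicity of $f$, and the same final application of the monotonicity of $K^{-1}$ together with \eqref{generalized_K-O}. No gaps.
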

  Here is the construction of the supersolution.

  \begin{lemma}\label{supersol}
    Suppose the validity of \eqref{Phi}, \eqref{F}, \eqref{L},
    \eqref{non_integr_inf} and of the Keller-Osserman
    \eqref{generalized_K-O}.
    Fix $0<\eps<\eta$ and $0<t_0<t_1$. Then, for every $\widetilde{B}>0$ there exist $T>t_1$ and a strictly
    increasing, convex function
    \[\alpha : [t_0, T) \ra [\eps,
    +\infty)
    \]
    satisfying
    \begin{equation}\label{prop_supersol}
      \begin{cases}
        \pa{\varphi(\alpha')}' + \frac{2m+1}{t}\varphi(\alpha')
        \leq \widetilde{B} f(\alpha)l(\alpha'); \\
        \alpha(t_0) = \eps, \quad\alpha(t_1) \leq \eta; \\
        \alpha(t) \uparrow +\infty \text{ as } t \ra T^-.
      \end{cases}
    \end{equation}
    \begin{proof}
    Consider $\sigma\in(0,1]$ to be determined later and choose $T_\sigma>t_0$ such that
    \begin{equation*}
    T_\sigma-t_0=\int^{+\infty}_\eps\frac{\mathrm{d}s}{K^{-1}(\sigma
    F(s))}.
    \end{equation*}
    Note that the RHS is well defined by Lemma \ref{sigma_lemma} and, since it diverges as $\sigma\ra0^+,$ up to
    choosing $\sigma$ sufficiently small we can shift $T_\sigma$
    in such a way that $T_\sigma>t_1.$ We implicitly define the
    $C^2$-function $\alpha(t)$ by requiring
    \begin{equation*}
    T_\sigma-t=\int^{+\infty}_{\alpha(t)}\frac{\mathrm{d}s}{K^{-1}(\sigma
    F(s))} \qquad \text{on }\ [t_0, T_\sigma).
    \end{equation*}
    We observe that, by construction, $\alpha(t_0)=\eps$ and,
    since $K>0,$ $\alpha(t)\uparrow+\infty$ as $t\ra T_\sigma.$ A
    first differentiation yields
    \begin{equation*}
    \frac{\alpha'}{K^{-1}(\sigma F(\alpha))}=1,
    \end{equation*}
    hence $\alpha$ is monotone increasing and $\sigma
    F(\alpha)=K(\alpha').$ Differentiating once more we deduce
    \begin{equation*}
    \sigma
    f(\alpha)\alpha'=K'(\alpha')\alpha''=\frac{\alpha'\varphi'(\alpha')}{l(\alpha')}\alpha''.
    \end{equation*}
    Cancelling $\alpha'$ throughout we obtain
    \begin{equation*}
    \sq{\varphi\pa{\alpha'}}'=\varphi'\pa{\alpha'}\alpha''=\sigma
    f(\alpha)l(\alpha');
    \end{equation*}
    thus, integrating on $[t_0,t]$,
    \begin{equation*}
    \varphi\pa{\alpha'(t)}=\varphi\pa{\alpha'(t_0)}+\sigma\int_{t_0}^tf(\alpha(s))l(\alpha'(s))\,\mathrm{d}s.
    \end{equation*}
    Using \eqref{F} and \eqref{L} we deduce the following
    chain of inequalities:
    \begin{align*}
    &\sq{\varphi(\alpha')}' + \frac{2m+1}{t}\varphi(\alpha')=\\
    &=\sigma f(\alpha)l(\alpha')+\frac{2m+1}{t}\varphi(\alpha'(t_0))+
    \frac{2m+1}{t}\sigma\int_{t_0}^tf(\alpha(s))l(\alpha'(s))\,\mathrm{d}s=\\
    &=\sq{\sigma+\frac{2m+1}{t}\frac{\varphi(\alpha'(t_0))}{f(\alpha(t))l(\alpha'(t))}+
    \frac{2m+1}{t}\frac{\sigma\int_{t_0}^tf(\alpha(s))l(\alpha'(s))\,\mathrm{d}s}
    {f(\alpha(t))l(\alpha'(t))}}f(\alpha(t))l(\alpha'(t))\le\\
    &\le\sq{\sigma+\frac{2m+1}{t}\frac{\varphi(\alpha'(t_0))}{f(\alpha(t_0))l(\alpha'(t_0))}+
    \frac{2m+1}{t}\frac{\sigma f(\alpha(t))l(\alpha'(t))(t-t_0)}
    {f(\alpha(t))l(\alpha'(t))}}f(\alpha(t))l(\alpha'(t)),
    \end{align*}
    that is,
    \begin{equation}\label{dis_super}
    \sq{\varphi(\alpha')}' + \frac{2m+1}{t}\varphi(\alpha')\le
    \sq{\frac{2m+1}{t_0}\frac{\varphi(\alpha'(t_0))}{f(\alpha(t_0))l(\alpha'(t_0))}+
    2(m+1)\sigma}f(\alpha(t))l(\alpha'(t)).
    \end{equation}
    Since $K(0)=0$, $\alpha'(t_0)=K^{-1}(\sigma F(\eps))\ra0$ as
    $\sigma\ra0$, and using \eqref{non_integr_inf}, choosing $\sigma$ small enough we can estimate the
    whole square bracket with $\widetilde{B}$ to show the validity of the first of \eqref{prop_supersol}.\\
    It remains to prove that, possibly with a further reduction of
    $\sigma,$ $\alpha(t_1)\le\eta.$ From the trivial identity
    \begin{equation*}
    \int^{+\infty}_{\alpha(t_1)}\frac{\mathrm{d}s}{K^{-1}(\sigma
    F(s))}=T_\sigma-t_1=(T_\sigma-t_0) + (t_0-t_1)=\int^{+\infty}_{\eps}\frac{\mathrm{d}s}{K^{-1}(\sigma
    F(s))}+(t_0-t_1)
    \end{equation*}
    we deduce
    \begin{equation*}\int^{\alpha(t_1)}_\eps\frac{\mathrm{d}s}{K^{-1}(\sigma
    F(s))}=t_1-t_0.
    \end{equation*}
    It suffices to choose $\sigma$ such that $\int^{\eta}_\eps\frac{\mathrm{d}s}{K^{-1}(\sigma
    F(s))}>t_1-t_0;$ then obviously $\alpha(t_1)\le\eta.$ This
    completes the proof of the Lemma.
    \end{proof}
  \end{lemma}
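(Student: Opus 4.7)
The plan is to define $\alpha$ \emph{implicitly} as an inverse integral involving the Keller-Osserman kernel, with a free scaling parameter $\sigma \in (0,1]$ that will be tuned at the end to absorb all error terms. Specifically, I would set
$$T_\sigma := t_0 + \int_\eps^{+\infty} \frac{\mathrm{d}s}{K^{-1}(\sigma F(s))},$$
which is finite by Lemma \ref{sigma_lemma} and diverges as $\sigma \to 0^+$; in particular one can arrange $T_\sigma > t_1$ by taking $\sigma$ sufficiently small. Then define $\alpha : [t_0, T_\sigma) \to [\eps, +\infty)$ by the implicit relation
$$T_\sigma - t = \int_{\alpha(t)}^{+\infty} \frac{\mathrm{d}s}{K^{-1}(\sigma F(s))},$$
which automatically yields $\alpha(t_0) = \eps$ and $\alpha(t) \uparrow +\infty$ as $t \uparrow T_\sigma^-$.

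A first differentiation gives $\alpha'(t) = K^{-1}(\sigma F(\alpha(t)))$, equivalently $K(\alpha') = \sigma F(\alpha)$, so $\alpha$ is strictly increasing. A second differentiation, combined with $K'(\tau) = \tau \varphi'(\tau)/l(\tau)$, allows cancelation of $\alpha'$ and produces the pointwise identity $[\varphi(\alpha')]' = \sigma f(\alpha) l(\alpha')$, which in turn forces $\alpha'' > 0$, i.e.\ convexity.

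To verify the differential inequality in \eqref{prop_supersol}, I would integrate this identity over $[t_0, t]$ to write $\varphi(\alpha'(t))$ as $\varphi(\alpha'(t_0))$ plus a $\sigma$-integral of $f(\alpha)l(\alpha')$. Substituting back into the left-hand side of the target inequality and using the monotonicity of $f$ (from \eqref{F}) together with the $C$-monotonicity of $l$ (from \eqref{L}) along the increasing images of $\alpha$ and $\alpha'$, the integrand is dominated by $f(\alpha(t))l(\alpha'(t))$, so the integral term is bounded by $C(t-t_0)f(\alpha(t))l(\alpha'(t))$. Factoring out $f(\alpha(t))l(\alpha'(t))$ and using $t \ge t_0$ produces an estimate of the form
$$[\varphi(\alpha')]' + \tfrac{2m+1}{t}\varphi(\alpha') \leq \Bigl[\sigma + \tfrac{2m+1}{t_0}\tfrac{\varphi(\alpha'(t_0))}{f(\alpha(t_0))l(\alpha'(t_0))} + (2m+1)C\sigma\Bigr] f(\alpha(t))l(\alpha'(t))$$
on $[t_0, T_\sigma)$.

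The main obstacle is then tuning $\sigma$ so that the bracket above is at most $\widetilde{B}$. The two explicit $\sigma$-terms are trivial, but the middle term $\varphi(\alpha'(t_0))/l(\alpha'(t_0))$ must be driven to zero. This is where \eqref{non_integr_inf} is essential: since $K(0) = 0$, one has $\alpha'(t_0) = K^{-1}(\sigma F(\eps)) \to 0$ as $\sigma \to 0^+$, and then $\varphi(\alpha'(t_0))/l(\alpha'(t_0)) \to 0$ by the second clause of \eqref{non_integr_inf}. Finally, for the bound $\alpha(t_1) \leq \eta$, the implicit definition gives $\int_\eps^{\alpha(t_1)} \mathrm{d}s/K^{-1}(\sigma F(s)) = t_1 - t_0$; since the analogous integral on $[\eps, \eta]$ diverges as $\sigma \to 0^+$ by Lemma \ref{sigma_lemma}, a further reduction of $\sigma$ forces $\alpha(t_1) < \eta$, completing the construction with $T := T_\sigma$ for this last choice of $\sigma$.
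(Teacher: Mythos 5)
Your construction is correct and follows essentially the same route as the paper: the same implicit definition of $\alpha$ via the integral of $1/K^{-1}(\sigma F(\cdot))$, the same two differentiations yielding $[\varphi(\alpha')]'=\sigma f(\alpha)l(\alpha')$, and the same tuning of $\sigma$ using $\alpha'(t_0)=K^{-1}(\sigma F(\eps))\to 0$ together with \eqref{non_integr_inf} to make the bracket at most $\widetilde{B}$, followed by the identical argument for $\alpha(t_1)\le\eta$. Your explicit tracking of the $C$-monotonicity constant in the integral estimate is, if anything, slightly more careful than the paper's.
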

\end{subsection}

\begin{subsection}{Last step of the proof}\label{principale}

We denote with $u^*=\sup u$ and we first suppose that
$u^*<+\infty$. We reason by contradiction and assume $u\not\equiv
u^*$; by Proposition \ref{maximum_principle} $u < u^*$ on $H^m$.
Choose $r_0 >0$ and define
\[
u_0^* = \sup_{\overline{B}_{r_0}} u < u^*.
\]
Fix $\eta >0$ sufficiently small such that $u^* - u_0^* > 2\eta$,
and choose $\tilde{q} \in H^m \backslash \overline{B}_{r_0}$ such
that $u(\tilde{q}) > u^* - \eta$.\\
We then define $\tilde{r}= r(\tilde{q})$ and  we construct the
radial function $v(q) =\alpha(r(q))$ on $B_T \backslash B_{r_0}$,
with $\alpha$ and $T>\tilde{r}$ as in Lemma \ref{supersol},
$\widetilde{B}= 1/(\Lambda D)$, and satisfying the further
requirement:
\[
\eps \leq v \leq \eta \qquad \text{on } B_{\tilde{r}}
\backslash \overline{B}_{r_0}.
\]
We observe that $v$ is a supersolution for (\ref{filaplaciano}).
Towards this aim, first we note that by integration, \eqref{Phi}
and $s \in [0, 1]$, \eqref{cond_ult_phi} implies the inequality
\begin{equation}\label{conseg_cond_ult_phi}
  \varphi(st) \leq D s^\tau\varphi(t), \quad t \in \erre_0^+, \,\,
  s\in[0, 1].
\end{equation}
Next, considering the radial expression
\eqref{radializz_deltaphi}, using \eqref{L}, \eqref{cond_ult_phi},
\eqref{conseg_cond_ult_phi} and Lemma \ref{supersol} we have
\begin{align*}
\Delta^\varphi_{H^m}\alpha(r(q))&=\sqrt{\psi}\sq{\sqrt{\psi}\varphi'\pa{\alpha'(r)\sqrt{\psi}}\alpha''(r)
+\frac{2m+1}{r}\varphi\pa{\alpha'(r)\sqrt{\psi}}}\le\\
&\le\pa{\sqrt{\psi}}^{1+\tau}D\sq{\varphi'\pa{\alpha'(r)}\alpha''(r)
+\frac{2m+1}{r}\varphi\pa{\alpha'(r)}}\le\\
&\le\pa{\sqrt{\psi}}^{1+\tau}D\sq{\frac{1}{\Lambda D}f(\alpha(r))l(\alpha'(r))}\le\\
&\le
f(\alpha(r))l(\alpha'(r)\sqrt{\psi})=f(\alpha(r))l(\absh{\gradh{\alpha(r)}}).
\end{align*}
 Moreover
\[
u(\tilde{q}) - v(\tilde{q}) > u^* - \eta  - \eta = u^* - 2\eta,
\]
and, on $\partial B_{r_0}$,
\[
u(q) - v(q) \leq u^*_0 - \eps < u^* -  2\eta - \eps.
\]
Thus, considering the difference $u-v$ on the annular region $B_T
\backslash B_{r_0}$, since by construction
\[
u(q) - v(q) \ra -\infty \qquad \text{as } r(q) \ra T,
\]
it follows that $u-v$ attains a positive maximum $\mu$ in
$B_T\backslash \overline{B}_{r_0}$. Let $\Gamma_\mu$ be a
connected component of
\[
\set{q \in B_T\backslash\overline{B}_{r_0} : u(q) - v(q) = \mu}.
\]
Let $\xi \in \Gamma_\mu$ and note that $u(\xi) > v(\xi)$ and
$\absh{\gradh{u (\xi)}}= \absh{\gradh{v (\xi)}}$. As a
consequence, since $f$ is strictly increasing,
\[
\Delta^\varphi_{H^m}u(\xi) \geq f(u(\xi))l(\absh{\gradh{u(\xi)}})>
f(v(\xi))l(\absh{\gradh{ v(\xi)}})\geq \Delta^\varphi_{H^m}v(\xi).
\]
By continuity, there exists an open set $V \supset \Gamma_\mu$
such that
\begin{equation}\label{lu_magg_lv}
\Delta^\varphi_{H^m}u \geq \Delta^\varphi_{H^m}v \qquad \text{on }
V.
\end{equation}
Fix now $\xi \in \Gamma_\mu$ and a parameter $0 < \rho <\mu$; let
$\Omega_{\xi, \rho}$ be the connected component containing $\xi$
of the set
\[
\set{q \in B_{T} \backslash \overline{B}_{r_0} : u(q) > v(q) +
\rho}.
\]
We observe that $\xi \in \Omega_{\xi,\,\rho}$  for every $\rho$
and that $\Omega_{\xi,\, \rho}$ is a nested sequence as $\rho$
converges to $\mu$. We claim that if $\rho$ is close to $\mu$,
then $\overline{\Omega}_{\xi,\, \rho} \subset V$. This can be
shown by a compactness argument such as the following: since
$\Gamma_\mu$ is closed and bounded, there exists $\eps > 0$ such
that $d(V^c, \Gamma_\mu) \geq \eps$. Suppose, by contradiction,
that there exist sequences $\rho_n \uparrow \mu$ and $\{q_n\}$
such that $q_n \in \Omega_{\xi, \rho_n}$ and $d(q_n, \Gamma_\mu) >
\eps$. Then, we can assume that the sequence is contained in
$\Omega_{\xi, \rho_0}$ which, by construction, has compact
closure; passing to a subsequence converging to some
$\overline{q}$, we have by continuity
\begin{equation}\label{distanza_x_segnato}
d(\overline{q}, \Gamma_\mu) \ge \eps,\
\end{equation}
but, on the other hand, $(u-v)(\overline{q}) = \lim_{n} (u-v)(q_n)
\geq \lim_n \rho_n = \mu$, hence $\overline{q} \in \Gamma_\mu$ and
this contradicts \eqref{distanza_x_segnato}. Therefore,
$d(\partial\Omega_{\xi, \rho}, \Gamma_\mu) \ra 0$ as $\rho \ra
\mu$, and the claim is proved.

\noindent On $\partial \Omega_{\xi,\, \rho}$ we have $u(q)=v(q) +
\rho$; since $v(q) + \rho$ solves
\[
\Delta^\varphi_{H^m}(v + \rho) = \Delta^\varphi_{H^m}v \leq
f(v)l(\absh{\gradh{v}}) \leq f(v + \rho)l(\absh{\gradh{(v +
\rho)}}),
\]
by Proposition \ref{comparison},
\[
u(q) \leq v(q) + \rho.
\]
But  $u(\xi) = v(\xi) + \mu$ and $\xi \in \Omega_{\xi,\, \rho}$, a
contradiction. The case $u^* = +\infty$ is easier and can be
treated analogously. This shows that $u \equiv c$, where $c$ is a
non-negative constant; in case $l(0) > 0$ we have $0 =
\Delta^\varphi_{H^m}c \geq f(c)l(0)$. This implies $f(c) = 0$,
hence $c=0$.
\end{subsection}
\end{section}

\begin{section}{Proof of Theorem
\ref{th_existence}}\label{sect_existence}

    This section is devoted to proving the result stated in Theorem
 \ref{th_existence}; first of all we observe that the sufficiency of the Keller-Osserman condition, i.e.
 implication $ii) \Rightarrow i)$, follows from Theorem
 \ref{non_existence_th}. In particular, it is easy to see that
 \eqref{PeL} implies \eqref{non_integr_inf} and that \eqref{L2p}
 implies \eqref{L2}. This latter follows since $\Delta^p_{H^m}$
 satisfies \eqref{cond_ult_phi} for every $0 \le \tau \le p-1$ (as
 we have already pointed out), and $\tau = p-1$ is the
 best choice.
 Our aim is therefore to provide existence of unbounded $C^1$-solutions of inequality
\eqref{disug_delta} under the assumption that
\eqref{generalized_K-O} is not satisfied; this will be achieved
through a careful pasting of two subsolutions defined on
complementary sets. First, we deal with ``\emph{radial stationary
functions}'', that is, functions of the form
\begin{equation*}
  v(q) = w(|z|), \qquad q = (z, t) \in H^m,
\end{equation*}
 where $w : \erre_0^+ \ra
\erre, \, w \in C^2(\erre_0^+)$. Performing computations very
similar to those in Subsection \ref{sect_radial}, we obtain the
following identities:
\[
\absh{\gradh{|z|}} \equiv 1, \qquad \Delta_{H^m}|z| = \frac{2m-1}{|z|},
\]
and thus the expression of the $\varphi$-Laplacian for a radial
stationary function is
\begin{equation}\label{radialstaz}
  \Delta^\varphi_{H^m}v = \varphi'\pa{|w'(|z|)|}w''(|z|) +
  \frac{2m-1}{|z|}\operatorname{sgn}\pa{w'(|z|)}\varphi\pa{|w'(|z|)|}.
\end{equation}
This shows that radial stationary functions in the Heisenberg
group behave  as Euclidean radial ones, and this fact allows us to
avoid dealing with the density
function.\\
Now let $\eps>0$ and $\sigma\ge1$ to be determined later and
define $w_\sigma(t)$ implicitly by
\begin{equation}\label{impli}
    t=\int_\eps^{w_\sigma(t)}\frac{\mathrm{d}s}{K^{-1}(\sigma F(s))},
\end{equation}
The existence of $w_\sigma$  on all $\erre_0^+$ is ensured by the
negation of the Keller-Osserman condition, through the reversing
of Lemma \ref{sigma_lemma}. Observe that $w_\sigma(0)=\eps$ and
$w'_\sigma(t)=K^{-1}(\sigma F(w_\sigma(t)))\ge K^{-1}(F(\eps)) >0$
on $\erre_0^+.$ Define
\begin{equation*}
  t_\sigma=\int_\eps^{2\eps}\frac{\mathrm{d}s}{K^{-1}(\sigma F(s))},
\end{equation*}
so that $w_\sigma(t_\sigma)=2\eps.$\\
The function $u_2(z,t)=w_\sigma(|z|)$ is $C^1$ for $|z|\ge
t_\sigma$ and satisfies
\begin{equation*}
\Delta^\varphi_{H^m}u_2 \geq
  \sigma f(u_2))l(\absh{\gradh{u_2}}) \ge
  f(u_2))l(\absh{\gradh{u_2}})
\end{equation*}
since $\varphi'(w'(|z|))w''(|z|)= \sigma f(w(|z|))l(w'(|z|))$ and
$\varphi$ is non-negative. Unfortunately, $u_2$ is only Lipschitz
on the line $|z|=0$. One might get rid of this problem modifying
the base point of the integral \eqref{impli}, that is,
substituting $\eps$ with $0$, but then one should require
$1/K^{-1}(\sigma F(s)) \in L^1(0^+)$, an assumption which we want
to avoid. Therefore we solve the problem by using a gluing
technique and pasting together a subsolution defined on $|z| \le
t_\sigma$ and a modification of $u_2$ on $|z| \ge t_\sigma$.\\
First of all we consider the Cauchy problem
\begin{equation*}
\left\{ \begin{array}{l}
    [\varphi(\alpha')]' = \Theta
    \quad \text{on } \ [0,+\infty) \\[0.2cm]
    \alpha(0)= \alpha'(0) = 0,
    \end{array}\right.
\end{equation*}
with $\Theta$ a constant to be determined later. This problem has
the solution $\alpha\in C^1(\erre^+_0)\cap C^2(\erre^+)$
\begin{equation*}
  \alpha(t)=\int_0^t\varphi^{-1}(\Theta s) \,\mathrm{d}s;
\end{equation*}
note that $\alpha'(t)>0$ when $t>0$. Choosing
$\Theta=\frac{\varphi(1)}{t_\sigma},$ we have
\begin{equation*}
  \alpha'(t_\sigma)=1 \quad \text{ and }\quad \alpha(t_\sigma)=\int_0^{t_\sigma}\varphi^{-1}(\Theta s) \,\mathrm{d}s\le t_\sigma,
\end{equation*}
and if we fix an $\eps>0$ so that $K^{-1}(F(\eps))>1,$ we also
have that
\begin{equation}\label{adayinthelife}
  \frac{\alpha'(t_\sigma}{w_\sigma'(t_\sigma)}=\frac{1}{K^{-1}(\sigma F(\eps))}\le\frac{1}{K^{-1}(F(\eps))}<1.
\end{equation}
Furthermore, noting that $t_\sigma\ra0$ as $\sigma\ra+\infty,$ up
to choosing $\sigma$ sufficiently large, we have
\begin{equation}\label{helterskelter}
\alpha(t_\sigma)<\eps,
\end{equation}
and since $\Theta=\frac{\varphi(1)}{t_\sigma}\ra+\infty$ as
$\sigma\ra+\infty,$ we can choose $\sigma$ large enough so that
\begin{equation}\label{lucy}
  f(\alpha(t))l(\alpha'(t))\le\Theta\quad\forall t\in[0,t_\sigma].
\end{equation}
This last condition implies that the composition
$u_1(z,t)=\alpha(|z|)$, which is $C^1$ even at $|z|=0,$ satisfies
\begin{equation}\label{u1}
\Delta^\varphi_{H^m} u_1 \ge f(u_1) l(\absh{\gradh{u_1}}) \qquad
\text{on } \ \overline{B}_{T_\sigma}.
\end{equation}
Now we need to glue the solutions $u_1$ and $u_2$ together, and to
this end we define a real $C^2$-function $\gamma_\sigma : [
w_\sigma(t_\sigma), +\infty) \rightarrow [\alpha(t_\sigma),
+\infty)$ such that
\begin{equation}\label{condizioni}
\gamma_\sigma (w_\sigma(t_\sigma)) = \alpha(t_\sigma), \quad 0 <
\gamma_\sigma' \le 1, \quad \gamma_\sigma'(w_\sigma(t_\sigma)) =
\frac{\alpha'(t_\sigma)}{w_\sigma'(t_\sigma)}, \quad \gamma_\sigma'' \ge
0
\end{equation}
Using \eqref{adayinthelife} and $\eqref{helterskelter}$, it is not
hard to see that the above conditions are not contradictory: in
particular from
$\alpha(t_\sigma)<\eps=w_\sigma(0)<w_\sigma(t_\sigma)$ and
$\alpha'(t_\sigma)<w'_\sigma(t_\sigma),$ we see that the requests
involving $\gamma'_\sigma(t)$ are indeed compatible, and it also
holds
\begin{equation}\label{minoret}
\gamma_\sigma(t)\le t \qquad \text{on } \ [w_\sigma(t_\sigma),+\infty).
\end{equation}
Next, we consider the following function, depending on the
parameter $\sigma$:
\begin{equation}
u(z,t)=\left\{ \begin{array}{ll}
u_1(z,t)=\alpha(|z|) & \text{if } \ |z| \in [0,t_\sigma] \\[0.1cm]
(\gamma_\sigma \circ u_2)(z,t)=(\gamma_\sigma \circ w) (|z|)&
\text{if } \ |z| \in [t_\sigma,+\infty)
\end{array}\right.
\end{equation}
Note that, by construction, $u$ has global $C^1$-regularity even
on the cylinder $|z|=t_\sigma$. It remains to prove that, up to
choosing $\sigma$ large enough, it is a subsolution of
\eqref{disug_delta} on the whole $H^m$. By \eqref{lucy}, we only
need to check this for $|z|\ge t_\sigma,$ but unfortunately, in
order to treat this case, we need to assume some homogeneity
conditions which would give $\varphi$ a structure very similar to
the one of the $p$-Laplacian. Therefore, it is more enlightening
to treat directly the $p$-Laplacian case, where things get
simpler. A computation that uses \eqref{condizioni},
\eqref{minoret}, the $C$-monotonicity of $l$ and the monotonicity
of $f$ shows that
\begin{equation}\label{finale}
\begin{array}{l}
\Delta^p_{H^m} u =  \gamma_\sigma'[(\gamma_\sigma')^{p-2}|\gradh{u_2}|_{H^m}^{p-2}]
\Delta_{H^m} u_2 + \\[0.3cm]
 + (\gamma_\sigma')^{p-2}|\gradh{u_2}|_{H^m}^{p-2} \gamma_\sigma''
|\gradh{u_2}|_{H^m}^2 +\\[0.3cm]
 + (p-2)(\gamma_\sigma')^2 (\gamma_\sigma')^{p-3} |\gradh{u_2}|_{H^m}^{p-3} \gradh{u_2}
\cdot \gradh{\absh{\gradh{u_2}}} + \\[0.3cm]
 + (p-2)\gamma_\sigma'\gamma_\sigma'' |\gradh{u_2}|_{H^m}^3 (\gamma_\sigma')^{p-3}
|\gradh{u_2}|_{H^m}^{p-3} \ge \\[0.3cm]
 \ge
(\gamma_\sigma')^{p-1}\pa{|\gradh{u_2}|_{H^m}^{p-2}
\Delta_{H^m} u_2 + (p-2)|\gradh{u_2}|_{H^m}^{p-3} \gradh{u_2}
\cdot \gradh{\absh{\gradh{u_2}}} } = \\[0.3cm]
=  (\gamma_\sigma')^{p-1} \Delta^p_{H^m} u_2 \ge
(\gamma_\sigma')^{p-1} \sigma f(u_2)
l(\absh{\gradh{u_2}}) \ge \\[0.3cm]
 \ge  \displaystyle (\gamma_\sigma'(w(t_\sigma)))^{p-1}
f(\gamma_\sigma \circ u_2)
\frac{\sigma}{C}l(\gamma_\sigma'\absh{\gradh{u_2}})=\\[0.3cm]
 =  \displaystyle \pa{\frac{1}{K^{-1}(\sigma
F(2\eps))}}^{p-1} \frac{\sigma}{C} f(u)
l(\absh{\gradh{u}}).
\end{array}
\end{equation}
The proof is now complete provided we show that
\begin{equation*}
\frac{\sigma}{\left[K^{-1}(\sigma F(2\eps)\right]^{p-1}}
\longrightarrow + \infty \qquad \text{as } \ \sigma \rightarrow +\infty
\end{equation*}
Using the definition of $K$ and the growth condition \eqref{L2p}
we deduce
\begin{equation*}
K(t) = (p-1) \int_0^t \frac{s^{p-1}}{l(s)}\,\mathrm{d}s \ge (p-1) \int_0^t
\frac{s^{p-1}}{B_1+B_2s^\mu}\,\mathrm{d}s \asymp  t^{p-\mu} \quad \text{as }
\ t \rightarrow +\infty.
\end{equation*}
Hence, for some positive constant $\widetilde{C}$ we get
\begin{equation*}
K^{-1}(t) \le \widetilde{C}t^{\frac{1}{p-\mu}}.
\end{equation*}
It follows that, since $\mu<1$,
\begin{equation*}
\frac{\sigma}{\left[K^{-1}(\sigma F(2\eps))\right]^{p-1}} \ge
\frac{\sigma}{\widetilde{C}(\eps)\sigma^{\frac{p-1}{p-\mu}}}
\longrightarrow +\infty \quad \text{as } \ \sigma \rightarrow
+\infty.
\end{equation*}
Up to choosing $\sigma$ sufficiently large we can deduce from
\eqref{finale}
\begin{equation*}
\Delta^p_{H^m} u \ge f(u) l(\absh{\gradh{u}})
\qquad \text{on } |z| \ge t_\sigma
\end{equation*}
and we have the desired conclusion. To end the proof of the
theorem, we note that the $C^1$ regularity of $u$ on the cylinder
$|z|=t_\sigma$ and at the origin $o$
   makes it necessary to
  proceed with the weak formulation. Nevertheless, this is a standard matter because of the continuity of $\gradh{u}$:
  however, for the sake of completeness, let $\xi \in C^\infty_0(H^m)$ and define
  $$
  \begin{array}{l}
  \mathcal{V} = \set{ q=(z,t) \in H^m :  |z|< t_\sigma} \ \cap \ \mathrm{supp}(\xi),\\[0.2cm]
  \mathcal{W}= \set{ q=(z,t) \in H^m :   |z|> t_\sigma} \ \cap \ \mathrm{supp}(\xi), \\[0.2cm]
  \Gamma = \set{ q=(z,t) \in H^m :   |z|= t_\sigma} \ \cap \ \mathrm{supp}(\xi).
  \end{array}
  $$
  Through a suitable partition of
  unity, we can find $\xi_1,\xi_2\in C^\infty_0(H^m)$ such that
  $\xi=\xi_1+\xi_2$ and
  $$
  \mathrm{supp}(\xi_1) \subset \{ (z,t) \in H^m :  |z|< t_\sigma\},
  \quad \mathrm{supp}(\xi_2) \subset \{(z,t) \in H^m :  |z|> \frac{t_\sigma}{2}\}.
  $$
  Because of linearity, it is sufficient to
  show inequality \eqref{disug_delta} for $\xi_1$ and $\xi_2$. For $\xi_1$
  the weak formulation of \eqref{disug_delta} is immediate: indeed, on $\mathrm{supp}(\xi_1)$,
  $u\equiv u_1$ which solves \eqref{u1} weakly. Hence, we only
  need to consider $\xi=\xi_2$. Using the weak
  formulation \eqref{weak_form}, the definition of $u$ on $\mathcal{V},
  \ \mathcal{W}$, and remembering that
  \begin{itemize}
  \item[$(i)$] $u_1,u_2$ are pointwise subsolutions on
      $\mathcal{V}\backslash\{|z|=0\},\ \mathcal{W}$
      respectively, with non-vanishing gradient,
  \item[$(ii)$] $\xi=0$ in a neighborhood of $\{|z|=0\},$
  \end{itemize}
  we deduce, denoting with $\nu_\mathcal{V}$ and
  $\nu_\mathcal{W}$ the (Euclidean) normals to $\partial
  \mathcal{V}$ and $\partial \mathcal{W}$:
  $$
  \begin{array}{l}
  \displaystyle \int_{H^m} |\gradh{u}|_{H^m}^{p-2} \langle B\nabla u,\nabla \xi\rangle =
   \int_{\mathcal{V}}|\gradh{u_1}|_{H^m}^{p-2} \langle B\nabla u_1,\nabla \xi\rangle +
   \\[0.5cm]
   \displaystyle +\int_{\mathcal{W}}(\gamma_\sigma')^{p-1}|\gradh{u_2}|_{H^m}^{p-2} \langle B\nabla u_2,\nabla \xi\rangle
   =
   \displaystyle \int_{\partial \mathcal{V}} |\gradh{u_1}|_{H^m}^{p-2} \langle B\nabla
   u_1,\nu_\mathcal{V} \rangle\xi - \int_\mathcal{V} \xi
   \Delta^p_{H^m} u_1 +\\[0.5cm]
   \displaystyle +
   \int_\mathcal{W} |\gradh{u_2}|_{H^m}^{p-2} \langle B\nabla u_2,\nabla
   ((\gamma_\sigma')^{p-1}\xi)\rangle
   \displaystyle - \int_\mathcal{W}(p-1)(\gamma_\sigma')^{p-2}
   \gamma_\sigma'' |\gradh{u_2}|_{H^m}^p.
   \end{array}
  $$
  Using $\gamma_\sigma''\ge 0$ and the divergence theorem
  for the third addendum, we obtain
$$
  \begin{array}{l}
  \displaystyle \int_{H^m} |\gradh{u}|_{H^m}^{p-2} \langle B\nabla u,\nabla \xi\rangle
  \le
   \displaystyle \int_{\partial \mathcal{V}} |\gradh{u_1}|_{H^m}^{p-2} \langle B\nabla
   u_1,\nu_\mathcal{V} \rangle\xi - \int_\mathcal{V} \xi
   \Delta^p_{H^m} u_1 + \\[0.5cm]
   \displaystyle +
   \int_{\partial \mathcal{W}} |\gradh{(\gamma_\sigma \circ u_2)}|_{H^m}^{p-2} \langle B\nabla
   (\gamma_\sigma \circ u_2),\nu_\mathcal{W} \rangle\xi -
   \int_\mathcal{W} (\gamma_\sigma')^{p-1} \Delta^p_{H^m} u_2 \xi.
  \end{array}
  $$
  Note that the only possibly non-null part of the boundary integrals is
  along $\Gamma$, for which
  $\nu_\mathcal{V}=-\nu_\mathcal{W}$. Since $u$ is $C^1$ on
  $\Gamma$, the boundary terms cancel and, by $(i)$, $(ii)$
  together with the final estimates of \eqref{finale}
  we get
$$
  \begin{array}{l}
  \displaystyle \int_{H^m} |\gradh{u}|_{H^m}^{p-2} \langle B\nabla u,\nabla \xi\rangle
  \le - \int_\mathcal{V} f(u_1)l(\absh{\gradh{u_1}})\xi  + \\[0.5cm]
  \displaystyle - \int_\mathcal{W} \xi f(\gamma_\sigma\circ u_2)
  l(\absh{\gradh{(\gamma_\sigma \circ u_2)}})\xi = - \int_{H^m}
  f(u)l(\absh{\gradh{u}})\xi.
  \end{array}
$$
Therefore $u$ is a weak subsolution, and the proof is complete.

\end{section}

\section{Non-existence of bounded solutions}\label{sect_nonexistofbounded}
The aim of this section is to show that the differential
inequality \eqref{disug_delta} admits no non-constant,
non-negative  bounded solutions in general, that is, even if the
Keller-Osserman condition is not satisfied.

 \begin{theorem}\label{non_existence_th_for_bounded}
    Let $\varphi, f,  l$ satisfy \eqref{Phi},  \eqref{F}, \eqref{L},
    \eqref{non_integr_inf}, \eqref{cond_ult_phi} and \eqref{L2}.
        Then every non-negative bounded $C^1$-solution $u$ of
        \begin{equation}\label{disug_delta_ancora}
      \Delta^\varphi_{H^m}u \geq f(u)l(\absh{\gradh{u}}) \qquad
      \text{on}\,\,\,
      H^m
    \end{equation}
    is constant; moreover, if $l(0)>0$, then $u \equiv 0$.
    \begin{proof}
     Let $u$ be a non-negative bounded solution of \eqref{disug_delta_ancora} and let $u^*= \sup_{H^m} u$.
     We follow the same steps of the proofs of Lemma \ref{supersol} and Theorem
    \ref{non_existence_th} and define a radial supersolution $v(q)=\alpha(r(q))$, where $\alpha : [r_0,
    T_\sigma)\ra \erre^+$ is  defined by
    \begin{equation*}
      T_\sigma - t = \int_{\alpha(t)}^A \frac{\mathrm{d}s}{K^{-1}(\sigma
      F(s))},
    \end{equation*}
    with $A$ any constant greater than $u^*$. Note that, as before, $\alpha(r_0) =\eps, \alpha(\tilde{r}) <
    \eta$ and $\alpha'(t) > 0$ on $[r_0, T_\sigma)$,  while $\alpha(T_\sigma) =
    A$.

    Now choose $r_0$ as in Section  \ref{principale} and consider the difference
    $u-v$ in the annular region $B_{T_\sigma} \backslash
    \overline{B}_{r_0}$; note that, on $\partial B_{r_0}$,
    $u - v < u^* - 2\eta - \eps$, there exists $\tilde{q}$ such
    that $u(\tilde{q}) - v(\tilde{q}) > u^* - 2\eta$, and, on $\partial
    B_{T_\sigma}$, $u-v < u^* - A < 0$. Thus $u-v$ attains a positive maximum $\mu$ at some point
    of $B_{T_\sigma} \backslash \overline{B}_{r_0}$.

    Hereafter, the proof proceeds exactly as that of Theorem
    \ref{non_existence_th}, so we omit the details.
    \end{proof}
    \end{theorem}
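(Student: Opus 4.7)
The plan is to mimic the proof of Theorem \ref{non_existence_th} almost verbatim, the only non-trivial modification being the construction of the radial supersolution, which must now cope with the absence of \eqref{generalized_K-O}. Since $u$ is assumed bounded above by $u^* = \sup_{H^m} u < +\infty$, it is not necessary to have a supersolution blowing up at a finite ``time''; it suffices to produce one that exceeds $u^*$ at the outer boundary of the annulus where the comparison is carried out. This is precisely what allows the integrability condition $1/K^{-1}(\sigma F) \in L^1(+\infty)$ to be dropped: we only integrate on the finite interval $[\alpha(t), A]$ for some $A > u^*$.

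More concretely, I would first assume by contradiction that $u$ is not constant, so that by Proposition \ref{maximum_principle} one has $u < u^*$ on $H^m$. Fix $r_0 > 0$, set $u_0^* = \sup_{\overline{B}_{r_0}} u < u^*$, pick $\eta > 0$ small with $u^* - u_0^* > 2\eta$, and choose $\tilde{q}$ with $u(\tilde{q}) > u^* - \eta$ and $\tilde{r} = r(\tilde{q}) > r_0$. I would then replicate the argument of Lemma \ref{supersol}, but defining $\alpha$ implicitly by
\begin{equation*}
T_\sigma - t = \int_{\alpha(t)}^{A} \frac{\mathrm{d}s}{K^{-1}(\sigma F(s))},
\end{equation*}
where $A$ is any constant strictly greater than $u^*$, and $T_\sigma$ is fixed so that $\alpha(r_0) = \eps$. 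The finite length of the interval of integration makes this meaningful independently of \eqref{generalized_K-O}, and by letting $\sigma$ tend to $0$ one still has $T_\sigma \to +\infty$ (by Lemma \ref{sigma_lemma} applied in reverse to the finite integral, or directly by the fact that $K^{-1}(\sigma F)$ vanishes uniformly on $[\eps,A]$ as $\sigma \to 0^+$). Therefore $\sigma$ can be chosen small enough to guarantee $T_\sigma > \tilde{r}$, the bound $\alpha(\tilde{r}) < \eta$, and the smallness condition on the square bracket in \eqref{dis_super} that ensures the first inequality of \eqref{prop_supersol} with $\widetilde{B} = 1/(\Lambda D)$. Differentiation gives $\alpha(T_\sigma) = A$ and $\alpha'(t) > 0$ on $[r_0,T_\sigma)$, exactly as before.

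With this supersolution in hand, I would set $v(q) = \alpha(r(q))$ on $B_{T_\sigma} \setminus \overline{B}_{r_0}$; the same computation as in Subsection \ref{principale}, using \eqref{cond_ult_phi}, \eqref{L2} and \eqref{conseg_cond_ult_phi}, shows that $v$ is a radial supersolution of \eqref{disug_delta_ancora}. Now examine the difference $u - v$ on the closed annulus: on $\partial B_{r_0}$ one has $u - v \le u_0^* - \eps < u^* - 2\eta - \eps$; at $\tilde{q}$ one has $u(\tilde{q}) - v(\tilde{q}) > u^* - 2\eta$; and on $\partial B_{T_\sigma}$ one has $u - v \le u^* - A < 0$ by the choice $A > u^*$. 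So $u - v$ attains a positive maximum $\mu$ in the interior of the annulus.

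From here the argument is identical to that of Subsection \ref{principale}: at any point $\xi$ of the level set $\Gamma_\mu = \{u - v = \mu\}$ one has $u(\xi) > v(\xi)$ with equal Heisenberg gradients, so strict monotonicity of $f$ yields $\Delta^\varphi_{H^m} u > \Delta^\varphi_{H^m} v$ in an open neighborhood $V \supset \Gamma_\mu$; a compactness argument shows $\overline{\Omega}_{\xi,\rho} \subset V$ for $\rho$ close enough to $\mu$, and Proposition \ref{comparison} applied to $u$ and $v + \rho$ on $\Omega_{\xi,\rho}$ gives $u \le v + \rho$, contradicting $u(\xi) = v(\xi) + \mu$. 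Hence $u$ is constant; if $l(0) > 0$, then $0 = \Delta^\varphi_{H^m} c \ge f(c) l(0)$ forces $c = 0$. The main obstacle, of course, is merely the re-verification that the chain of estimates leading to \eqref{dis_super} and \eqref{prop_supersol} still closes for the truncated integral definition of $\alpha$, which is clear because all the analysis in Lemma \ref{supersol} takes place on the finite interval $[t_0, t_1]$ and never exploits the global integrability at infinity.
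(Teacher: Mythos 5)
Your proposal is correct and follows essentially the same route as the paper: replace the blow-up supersolution by the truncated one defined via $T_\sigma - t = \int_{\alpha(t)}^{A}\frac{\mathrm{d}s}{K^{-1}(\sigma F(s))}$ with $A>u^*$, note that the estimates of Lemma \ref{supersol} are unaffected since they never use integrability at infinity, and run the comparison on the annulus using $u-v\le u^*-A<0$ on the outer boundary. Your added justification that $T_\sigma\to+\infty$ as $\sigma\to 0^+$ (so that $T_\sigma>\tilde{r}$ and $\alpha(\tilde{r})<\eta$ can be arranged) is exactly the detail the paper leaves implicit.
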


\section{More differential inequalities}\label{more}

 The aim of this section is to show that the method used so far allows
 us to treat some other cases; in
 particular, we focus our attention on the
differential inequality \eqref{equaz_col_meno}, that is,

\[
 \Delta^\varphi_{H^m}u \geq f(u) - h(u)g\pa{\absh{\gradh{u}}}.
\]

As a matter of fact, the most interesting case arises when $h\ge
0$ and $g \ge 0$, that is, when we have the action of two opposite
terms and when the standard comparison arguments do not apply.
Indeed, as we shall see, in the generalized Keller-Osserman
condition the terms $h$ and $f$ play very different roles.

\subsection{Basic assumptions and a new adapted Keller-Osserman
condition}

We  collect the following further set of hypotheses:
\begin{equation}\label{H}\tag{$H$}
    h \in C^0(\erre^+), \,\, h(t) \geq 0 \,\,\text{on}\,\,
    \erre^+, \,\, h \in L^1(0^+), \,\, h \text{ monotone non-increasing};
    \end{equation}
\begin{equation}\label{Phi0}\tag{$\Phi0$}
    t \varphi'(t) \in
    L^1(0^+);
\end{equation}
\begin{equation}\label{Phi3}\tag{$\Phi3$}
    \exists \, B > 0,\,
    \theta \in (-\infty, 2) : \varphi'(ts) \geq
    B\varphi'(t)s^{-\theta}\,\,
    \forall \, t \in \erre^+, \forall \, s \in [1, +\infty).
\end{equation}
Integrating, it is easy to deduce that the following condition is
implied by \eqref{Phi3}:
\begin{equation}
    \varphi(ts) \geq
    B\varphi(t)s^{1-\theta}\,\,
    \forall \, t \in \erre^+, \forall \, s \in [1, +\infty),
\end{equation}
Note that $\varphi(t) = t^{p-1}, p
>1$ satisfies \eqref{Phi3} with $B=1$, $2-p\le \theta<2$. Again, by way of example,  if
\[
\varphi(t) = \int_0^t \frac{\mathrm{d}s}{P(s)},
\]
where $P(s)$ is a polynomial with degree at most $\theta$,
non-negative coefficients and such that $P'(0)>0$, then $\varphi$
satisfies \eqref{Phi3}. We would also like to stress that
conditions \eqref{Phi3} and \eqref{cond_ult_phi} are compatible,
as it is apparent,
for instance, for the $p$-Laplacian .\\
As in the previous theorems, the necessity of dealing with the
density function leads us to require a relaxed homogeneity also on
$g$, as expressed by the following inequality:
\begin{equation}\label{G}\tag{$G$}
  g(st) \leq \widetilde{D} s^{\tau +1} t^2 \varphi'(t) \quad \forall \, s\in [0,
  1], \, t \in \erre^+
\end{equation}
where $\tau$ is as in \eqref{cond_ult_phi} and $\tilde{D}$ is a
positive constant; this bound on $g$ is also due to a structural
constraint which comes from the construction of the supersolution.
Unfortunately, for the $p$-Laplacian this turns out to be quite
restrictive. For example, if $g(t)=Dt^\nu$, for some $0\le \nu$
and some constant $D>0$, it is not
hard to see that \eqref{G} holds if and only if $\nu=p$.
However, since \eqref{equaz_col_meno} is an inequality, solving for this $g$ will solve for any other smaller $g$.\\
We now examine the steps leading to the definition of the
Keller-Osserman condition adapted to inequality
\eqref{equaz_col_meno}. Setting $t=1$ in \eqref{Phi3} we have

\[
\varphi'(s) \geq B \varphi'(1) s^{-\theta},
\]
and since $\varphi'(1)>0$ we deduce, integrating and using
$\theta<2$,

\[
t \varphi'(t) \not \in L^1(+\infty).
\]
In the present case, $l \equiv 1$ and the definition of $K$ given
in \eqref{def_K} becomes

\[
K(t) = \int_0^t s \varphi'(s)\,\mathrm{d}s.
\]
It follows that \eqref{Phi3} with $\theta \leq 2$ implies that $K$
is a $C^1$-diffeomorphism from $\erre_0^+$ onto itself. From
\eqref{Phi3} we also have, for $s \in \erre^+, y \in [1,
+\infty)$,

\[
\int_0^t s y \varphi'(sy)\,\mathrm{d}s \geq B y^{1-\theta}
\int_0^t s \varphi'(s)\,\mathrm{d}s,
\]
so that
\begin{equation}\label{kappa}
  K(ty) \geq B y^{2-\theta}K(t) \qquad \forall \, t \in \erre^+,
  \, \forall \, y \in [1, +\infty).
\end{equation}
Next, we define
\begin{equation*}
  \hat{F}(t)=\int_0^tf(s)e^{(2-\theta)\int_0^sh(x)\,\mathrm{d}x}\,\mathrm{d}s.
\end{equation*}
For $s\in\erre^+$ we let
\begin{equation*}
  t=K^{-1}\pa{\sigma\hat{F}(s)}.
\end{equation*}
Since $K^{-1}$ is non-decreasing we get
\begin{equation*}
  y=\frac{K^{-1}\pa{\hat{F}(s)}}{K^{-1}\pa{\sigma\hat{F}(s)}}\geq1,
\end{equation*}
and applying inequality \eqref{kappa} we deduce
\begin{equation*}
  K\pa{K^{-1}\pa{\hat{F}(s)}}\geq
  BK\pa{K^{-1}\pa{\sigma\hat{F}(s)}}\sq{\frac{K^{-1}\pa{\hat{F}(s)}}{K^{-1}\pa{\sigma\hat{F}(s)}}}^{2-\theta}.
\end{equation*}
Hence we obtain
\begin{equation}
  \sq{\frac{K^{-1}\pa{\hat{F}(s)}}{K^{-1}\pa{\sigma\hat{F}(s)}}}^{2-\theta}\leq\frac{1}{B\sigma}.
\end{equation}
Since $\theta<2$ this can be written as
\begin{equation}
  \frac{\sigma^{\frac{1}{2-\theta}}}{K^{-1}\pa{\sigma\hat{F}(s)}}\leq
  \frac{B^{-\frac{1}{2-\theta}}}{K^{-1}\pa{\hat{F}(s)}},\quad
  s\in\erre^+.
\end{equation}
In conclusion, the following inequality holds:
\begin{equation}\label{ineq_4.5}
  \int^{+\infty}\frac{e^{\int_0^sh(x)\,\mathrm{d}x}}{K^{-1}\pa{\sigma\hat{F}(s)}}\,\mathrm{d}s\leq
  \pa{\frac{1}{B\sigma}}^{\frac{1}{2-\theta}}\int^{+\infty}
  \frac{e^{\int_0^sh(x)\,\mathrm{d}x}}{K^{-1}\pa{\hat{F}(s)}}\,\mathrm{d}s.
\end{equation}
We are now ready to introduce the further generalized
Keller-Osserman condition in the form

\begin{definition}
 The \textbf{\emph{generalized Keller-Osserman condition}} for
inequality
$$
\Delta^\varphi_{H^m}u \geq f(u) - h(u)g\pa{\absh{\gradh{u}}}
$$
is the request:
  \begin{equation}\label{generalized_hatK-O}\tag{$\hat{KO}$}
\frac{e^{\int_0^th(x)\,\mathrm{d}x}}{K^{-1}\pa{\hat{F}(t)}}\in L^1(+\infty).
  \end{equation}
\end{definition}
\vspace{0.4cm}
As we have already mentioned, the roles of $f$ and $h$ in the
above condition are far from being specular. In particular, $h$
has two opposite effects: on the one hand the explicit term
$e^{\int_0^t h(x)\,\mathrm{d}x}$ supports the non-integrability,
hence the existence, on the other hand its presence in the
expression for $\hat{F}(t)$ favours integrability.\par

We observe that, under assumptions \eqref{H} and \eqref{Phi3},
inequality \eqref{ineq_4.5} implies that, if
\eqref{generalized_hatK-O} holds, then for every $\sigma \in (0,
1]$

\begin{equation}\label{ineq_4.6}
 \frac{e^{\int_0^th(x)\,\mathrm{d}x}}{K^{-1}\pa{\sigma \hat{F}(t)}}\in L^1(+\infty).
\end{equation}
A particular case arises when $h \in L^1(+\infty)$. We are going
to see that, independently of the sign of $h$, condition
\eqref{generalized_hatK-O} and \ref{generalized_K-O} are indeed
equivalent:
\begin{proposition}\label{PR_4.7}
  Assume \eqref{Phi}, \eqref{F}, \eqref{Phi3} and suppose that $h: \erre_0^+\rightarrow \erre$ is a continuous
  function
  such that $h \in L^1(+\infty)$. Then
  \[
   \frac{e^{\int_0^th(x)\,\mathrm{d}x}}{K^{-1}\pa{\hat{F}(t)}}\in L^1(+\infty) \quad \text{if and only
  if}  \quad \frac{1}{K^{-1}\pa{F(t)}}\in L^1(+\infty).
  \]
  \begin{proof}
    First of all we observe that, since $\theta<2$,
    \[
    \hat{F}(t)=\int_0^tf(s)e^{(2-\theta)\int_0^sh(x)\,\mathrm{d}x}\,\mathrm{d}s
    \leq
    e^{(2-\theta)\|h\|_{L^1}}\int_0^tf(s)\,\mathrm{d}s=
    \Lambda_1 F(t)
    \]
    with $\Lambda_1 \geq 1$. Similarly $F(t) \leq \Lambda_2\hat{F}(t)$ with $\Lambda_2 \ge 1$.

    \noindent Thus, since  $K^{-1}$ is non-decreasing
    \begin{equation}\label{4.7}
      \int^{+\infty}\frac{\mathrm{d}s}{K^{-1}\pa{F(s)}}\leq
  \int^{+\infty}
  \frac{\mathrm{d}s}{K^{-1}\pa{\Lambda_1^{-1}\hat{F}(s)}}.
    \end{equation}
    We now perform the change of variables $t = s \Lambda_1^{-1}$.
    Thus
    \begin{equation}\label{4.8}
    \int^{+\infty}\frac{\mathrm{d}s}{K^{-1}\pa{\Lambda_1^{-1}\hat{F}(s)}}\leq
  \Lambda_1\int^{+\infty}
  \frac{\mathrm{d}t}{K^{-1}\pa{\Lambda_1^{-1}\hat{F}(\Lambda_1 t)}}.
    \end{equation}
    Since $\Lambda_1\ge 1$, denoting with $a(s)= f(s)e^{(2-\theta)\int_0^sh(x)\,\mathrm{d}x}$
    we have
    \[
    \hat{F}(\Lambda_1 t) = \int_0^{\Lambda_1 t}a(y)\,\mathrm{d}y =
    \Lambda_1\int_0^t a(\Lambda_1 x)\,\mathrm{d}x \geq \Lambda_1 e^{-(2-\theta)
    \|h\|_{L^1}}\int_0^t a(z)\,\mathrm{d}z =
    \Lambda\hat{F}(t)
    \]
    for some constant $0<\Lambda\le \Lambda_1$. Hence $\Lambda_1^{-1}\hat{F}(\Lambda_1 t)
    \ge \sigma \hat{F}( t)$, where $\sigma= \Lambda\Lambda_1^{-1}
    \le 1$. Using \eqref{4.7}, \eqref{4.8}, the monotonicity of $K^{-1}$ and Lemma
    \ref{sigma_lemma} (in particular inequality
    \eqref{sigmalemma2}) we show that
    \begin{equation}\label{4.9}
    \begin{array}{l}
     \displaystyle  \int^{+\infty}\frac{\mathrm{d}s}{K^{-1}\pa{F(s)}}\leq
  \int^{+\infty}
  \frac{\mathrm{d}s}{K^{-1}\pa{\Lambda_1^{-1}\hat{F}(s)}} \le \\[0.6cm]
  \displaystyle \le \Lambda_1\int^{+\infty}\frac{\mathrm{d}s}{K^{-1}(\sigma F(s))} \le \frac{\Lambda_1}{\sigma}
  \int^{+\infty}\frac{\mathrm{d}s}{K^{-1}(F(s))}.
 \end{array}
    \end{equation}
    Therefore, $h \in L^1(\erre^+)$ and \eqref{4.9} immediately
    imply that
    \[
    \frac{e^{\int_0^th(x)\,\mathrm{d}x}}{K^{-1}\pa{\hat{F}(t)}}\in L^1(+\infty) \qquad \text{if and only if}
    \qquad\frac{1}{K^{-1}\pa{F(t)}}\in
    L^1(+\infty).
    \]

  \end{proof}

\end{proposition}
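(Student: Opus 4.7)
The plan is to exploit the hypothesis $h\in L^1(+\infty)$ to reduce the two integrability conditions to each other by a simple boundedness argument, and then use Lemma \ref{sigma_lemma} to absorb the resulting multiplicative constants inside $K^{-1}$.

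First I would observe that, since $h$ is continuous on $\erre_0^+$ and $h \in L^1(+\infty)$, the primitive $H(s)=\int_0^s h(x)\,\mathrm{d}x$ is bounded on $\erre_0^+$; hence there exist positive constants $m\le M$ with $m\le e^{H(s)}\le M$ and $m^{2-\theta}\le e^{(2-\theta)H(s)}\le M^{2-\theta}$ (regardless of the sign of $\theta$, since $\theta<2$). Integrating against $f$, this immediately yields
\[
\Lambda_1^{-1}\hat F(t)\le F(t)\le\Lambda_2\hat F(t)
\qquad\text{for every }t\in\erre_0^+,
\]
with constants $\Lambda_1,\Lambda_2\ge 1$ depending only on $m,M,\theta$. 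Moreover, the numerator $e^{H(t)}$ in \eqref{generalized_hatK-O} is bounded between the positive constants $m$ and $M$, so it plays no role in the integrability question. Thus, using the monotonicity of $K^{-1}$, the two conditions in the statement reduce to comparing $1/K^{-1}(F(t))$ with $1/K^{-1}(c F(t))$ for a positive constant $c$.

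Next I would handle the constant inside $K^{-1}$. When the constant is $\le 1$ the comparison is exactly Lemma \ref{sigma_lemma} (inequality \eqref{sigmalemma2}), which gives
\[
\int^{+\infty}\frac{\mathrm{d}s}{K^{-1}(\sigma F(s))}\le\sigma^{-1}\int^{+\infty}\frac{\mathrm{d}s}{K^{-1}(F(s))},\qquad\sigma\in(0,1].
\]
For the opposite direction, where one needs to bound an integral with $K^{-1}(\Lambda F(s))$ and $\Lambda\ge 1$, I would perform the change of variable $s=\Lambda t$ to rewrite it in terms of $K^{-1}(\Lambda^{-1}F(\Lambda t))$; since $f$ is monotone non-decreasing one has $F(\Lambda t)=\Lambda\int_0^t f(\Lambda z)\,\mathrm{d}z\ge \Lambda F(t)$, so $\Lambda^{-1}F(\Lambda t)\ge F(t)$. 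After inserting the earlier bounds between $\hat F$ and $F$ and adjusting constants, this reduces the opposite direction again to Lemma \ref{sigma_lemma}.

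Combining the two estimates in a chain of inequalities controlled by the fixed constants $\Lambda_1,\Lambda_2,m,M$ yields the equivalence. The main (if mild) obstacle is bookkeeping: one has to be careful with the direction of the monotonicity of $K^{-1}$ and with whether the rescaling factor that appears inside $K^{-1}$ is $\le 1$ or $\ge 1$, since only the first case is covered directly by Lemma \ref{sigma_lemma}. Notice that hypothesis \eqref{Phi3} is used only implicitly, to guarantee that $K$ is a $C^1$-diffeomorphism of $\erre_0^+$ onto itself so that $K^{-1}$ is well defined and monotone; the argument does not otherwise rely on the homogeneity of $\varphi'$.
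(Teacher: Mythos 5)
Your proposal is correct and follows essentially the same route as the paper: two\--sided comparison $\Lambda_1^{-1}\hat F\le F\le\Lambda_2\hat F$ from the boundedness of $\int_0^s h$, dismissal of the bounded numerator $e^{\int_0^t h}$, and absorption of the multiplicative constants inside $K^{-1}$ via the change of variables $s=\Lambda t$ together with $F(\Lambda t)\ge\Lambda F(t)$ and Lemma \ref{sigma_lemma}. The only cosmetic difference is that you rescale $F$ directly where the paper rescales $\hat F$, which changes nothing of substance.
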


\subsection{Construction of the supersolution and final steps}
Now we proceed with the construction of the supersolution; the
  idea follows  the lines of Lemma
  \ref{supersol}, but  we briefly reproduce the main steps.
\begin{lemma}\label{Le_4.11}
  Assume the validity of \eqref{Phi}, \eqref{F}, \eqref{H}, \eqref{Phi3} and of the Keller-Osserman assumption \eqref{generalized_hatK-O}. Fix
  $0 < \eps < \eta$, $0 < t_0 < t_1$. Then there exists $\sigma
  \in (0, 1]$, $T_\sigma > t_1$ and $\alpha : [t_0, T_\sigma) \ra
  [\eps, +\infty)$ satisfying
  \begin{equation}\label{4.12}
      \begin{cases}
        \pa{\varphi(\alpha')}' + \frac{2m+1}{t}\varphi(\alpha')
        \leq f(\alpha)-h(\alpha)(\alpha')^2\varphi'(\alpha'); \\
        \alpha'>0, \quad \alpha(t) \uparrow +\infty \quad\text{as}\quad t \ra T_\sigma^-,\\
        \alpha(t_0) = \eps\quad \text{and} \quad\alpha(t) \leq \eta \quad \text{on}\,\, [t_0,
        t_1].
      \end{cases}
    \end{equation}
    \begin{proof}
      First of all we observe that, using \eqref{generalized_hatK-O} and
      \eqref{ineq_4.6} we have that
      \[
      \int_\eps^{+\infty} \frac{e^{\int_0^sh(x)\,\mathrm{d}x}}{K^{-1}\pa{\sigma \hat{F}(s)}}\,\mathrm{d}s \, \uparrow +\infty \quad \text{as}
  \,\, \sigma \downarrow 0^+.
      \]
      We thus fix $\sigma_0 \in (0, 1]$ so that, for every $\sigma
      \in (0, \sigma_0]$
      \begin{equation}\label{4.13}
        T_\sigma = t_0 + \int_\eps^{+\infty} \frac{e^{\int_0^sh(x)\,\mathrm{d}x}}{K^{-1}\pa{\sigma \hat{F}(s)}}\,\mathrm{d}s > t_1.
      \end{equation}
     Implicitly define the $C^2$-function $\alpha : [t_0,
      T_\sigma) \ra [\eps, +\infty)$ by setting
      \begin{equation}\label{4.14}
        T_\sigma -t = \int_{\alpha(t)}^{+\infty} \frac{e^{\int_0^sh(x)\,\mathrm{d}x}}{K^{-1}\pa{\sigma
        \hat{F}(s)}}\,\mathrm{d}s.
      \end{equation}
      By construction, $\alpha(t_0) = \eps$ and $\alpha(t) \ra
      +\infty$ as $t \ra T_\sigma^-$. We differentiate
      \eqref{4.14} a first time to obtain
      \begin{equation}\label{4.14'}
        K^{-1}\pa{\sigma\hat{F}(\alpha)}=\alpha' e^{\int_0^\alpha h}
      \end{equation}
      so that $\alpha' >0$. Transforming the above into $\sigma\hat{F}(\alpha) = K\pa{\alpha' e^{\int_0^\alpha
      h}}$,
      differentiating once more and using the definition of
      $\hat{F}$ and $K$ we arrive at
      \begin{equation*}
      \sigma f(\alpha)e^{(2-\theta)\int_0^\alpha h}\alpha'=\alpha'e^{2\int_0^\alpha
      h}\varphi'\pa{\alpha'e^{\int_0^\alpha
      h}}\sq{\alpha''+\pa{\alpha'}^2h(\alpha)}.
      \end{equation*}
      We use \eqref{Phi3} and $\alpha'>0$ to deduce
      \begin{equation*}
        \sigma f(\alpha)\geq B\varphi'\pa{\alpha'}
        \sq{\alpha''+\pa{\alpha'}^2h(\alpha)}
      \end{equation*}
      and thus
      \begin{equation}\label{4.15}
        \varphi'\pa{\alpha'}\alpha''\leq\frac{\sigma}{B}f(\alpha)-\pa{\alpha'}^2\varphi'\pa{\alpha'}h(\alpha).
      \end{equation}
      Integrating \eqref{4.15} on $[t_0,t]$ and using $\alpha'>0$,
      $\varphi'\geq0$, \eqref{F} and \eqref{H} we obtain
      \begin{equation}\label{4.16}
        \varphi\pa{\alpha'(t)}\leq\varphi\pa{\alpha'(t_0)}+\frac{\sigma}{B}tf(\alpha(t)).
      \end{equation}
      Putting together \eqref{4.15} and \eqref{4.16} and using
      \eqref{F}
      \begin{align}\label{4.17}
        &\varphi'\pa{\alpha'}\alpha''+\frac{2m+1}{t}\varphi\pa{\alpha'}\leq\nonumber\\
         &\leq f(\alpha)
        \sq{\frac{\sigma}{B}2(m+1)+\frac{2m+1}{t_0}\frac{\varphi\pa{\alpha'(t_0)}}{f(\alpha(t_0))}}
        -\pa{\alpha'}^2h(\alpha)\varphi'\pa{\alpha'}.
      \end{align}
      From \eqref{4.14'}
      \begin{equation*}
        \alpha'(t_0)=K^{-1}\pa{\sigma\hat{F}(\eps)}e^{-\int_0^\eps
        h(x)\,\mathrm{d}x}.
      \end{equation*}
      Therefore, since $\varphi(t)\ra0$ as $t\ra0^+$, choosing
      $\sigma\in(0,\sigma_0]$ sufficiently small, \eqref{4.17}
      yields
      \begin{equation*}
        \varphi'\pa{\alpha'}\alpha''+\frac{2m+1}{t}\varphi\pa{\alpha'}\leq
        \frac{1}{D}f(\alpha)-h(\alpha)\pa{\alpha'}^2\varphi'\pa{\alpha'}
      \end{equation*}
      on $[t_0, T_\sigma)$. To prove that $\alpha(t)\leq\eta$ on
      $[t_0,t_1]$ we observe that
      \begin{equation*}
        t_1-t_0=T_\sigma-t_0+t_1-T_\sigma=\int_\eps^{\alpha(t_1)}
        \frac{e^{\int_0^sh(x)\,\mathrm{d}x}}{K^{-1}\pa{\sigma
        \hat{F}(s)}}\,\mathrm{d}s.
      \end{equation*}
      Hence, since the integrand goes monotonically to $+\infty$
      as $\sigma\ra0^+$, we need to have $\alpha(t_1)\ra\eps$ as
      $\sigma\ra0^+$. Since $\alpha'>0$ this proves the desired
      property.
    \end{proof}
\end{lemma}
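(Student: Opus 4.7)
The plan is to mimic the construction of Lemma \ref{supersol}, but replacing the integrand $1/K^{-1}(\sigma F(s))$ by the weighted version $e^{\int_0^{s}h(x)\,\mathrm{d}x}/K^{-1}(\sigma\hat{F}(s))$ suggested by the adapted Keller--Osserman condition \eqref{generalized_hatK-O}. Specifically, I would fix $\sigma\in(0,1]$ (to be tuned at the end) and set
\[
T_\sigma=t_0+\int_\eps^{+\infty}\frac{e^{\int_0^{s}h(x)\,\mathrm{d}x}}{K^{-1}\pa{\sigma\hat{F}(s)}}\,\mathrm{d}s.
\]
By \eqref{ineq_4.6} the integral converges for each $\sigma>0$, and Lemma \ref{sigma_lemma} (or a direct variant of \eqref{ineq_4.5}) shows it diverges as $\sigma\to 0^+$, so for $\sigma$ small enough we can arrange $T_\sigma>t_1$. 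Then $\alpha:[t_0,T_\sigma)\to[\eps,+\infty)$ is implicitly defined by
\[
T_\sigma-t=\int_{\alpha(t)}^{+\infty}\frac{e^{\int_0^{s}h(x)\,\mathrm{d}x}}{K^{-1}\pa{\sigma\hat{F}(s)}}\,\mathrm{d}s.
\]
This automatically gives $\alpha(t_0)=\eps$, $\alpha(t)\uparrow+\infty$ as $t\to T_\sigma^-$, and smoothness from the inverse function theorem.

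Differentiating once yields $\alpha' e^{\int_0^\alpha h}=K^{-1}\pa{\sigma\hat{F}(\alpha)}>0$, i.e.\ $K(\alpha'e^{\int_0^\alpha h})=\sigma\hat{F}(\alpha)$. Differentiating again, using $K'(t)=t\varphi'(t)$ and the defining formula for $\hat{F}$, the factor $e^{(2-\theta)\int_0^\alpha h}$ on the left should balance against $e^{2\int_0^\alpha h}\varphi'(\alpha' e^{\int_0^\alpha h})$ on the right. Here is exactly where I bring in \eqref{Phi3}: applying it with $t=\alpha'$ and $s=e^{\int_0^\alpha h}\ge 1$ (since $h\ge 0$) converts $\varphi'(\alpha' e^{\int_0^\alpha h})\ge B\varphi'(\alpha')e^{-\theta\int_0^\alpha h}$, and the exponentials cancel precisely thanks to the chosen exponent $(2-\theta)$ in the definition of $\hat{F}$. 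After simplification I obtain the pointwise inequality
\[
\varphi'(\alpha')\alpha''\le\tfrac{\sigma}{B}f(\alpha)-(\alpha')^2\varphi'(\alpha')h(\alpha),
\]
which already carries the correct dissipative term $-h(\alpha)(\alpha')^2\varphi'(\alpha')$ on the right.

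To absorb the lower-order term $\frac{2m+1}{t}\varphi(\alpha')$, I integrate the previous inequality on $[t_0,t]$, using $\varphi'\ge 0$, $h\ge 0$ and the monotonicity of $f$ to get a linear bound $\varphi(\alpha'(t))\le\varphi(\alpha'(t_0))+\frac{\sigma}{B}\,t\,f(\alpha(t))$. Substituting back and dividing by $t$ produces
\[
(\varphi(\alpha'))'+\tfrac{2m+1}{t}\varphi(\alpha')\le f(\alpha)\Bigl[\tfrac{\sigma}{B}2(m+1)+\tfrac{2m+1}{t_0}\tfrac{\varphi(\alpha'(t_0))}{f(\alpha(t_0))}\Bigr]-h(\alpha)(\alpha')^2\varphi'(\alpha').
\]
The first relation above gives $\alpha'(t_0)=K^{-1}(\sigma\hat{F}(\eps))e^{-\int_0^\eps h}\to 0$ as $\sigma\to 0^+$, so by continuity of $\varphi$ at $0$ (condition \eqref{Phi}) the square bracket tends to $0$. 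Choosing $\sigma$ small enough makes it $\le 1$, which yields the first line of \eqref{4.12}.

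Finally, for the constraint $\alpha(t_1)\le\eta$ I exploit the identity
\[
t_1-t_0=\int_\eps^{\alpha(t_1)}\frac{e^{\int_0^{s}h(x)\,\mathrm{d}x}}{K^{-1}(\sigma\hat{F}(s))}\,\mathrm{d}s,
\]
obtained by subtracting the defining equation at $t_1$ from that at $t_0$. Since the integrand tends to $+\infty$ monotonically as $\sigma\to 0^+$ on every fixed compact subinterval of $[\eps,\eta]$, a further reduction of $\sigma$ forces $\alpha(t_1)$ arbitrarily close to $\eps$, in particular $\le\eta$. The main obstacle I anticipate is the bookkeeping in the second differentiation: one must verify that the exponent $2-\theta$ appearing in $\hat{F}$ is precisely what is needed for \eqref{Phi3} to eliminate the exponential weights and reveal the clean pointwise estimate; this is where the definition of $\hat{F}$ reveals itself as being tailored to \eqref{Phi3}, not merely an ad hoc choice.
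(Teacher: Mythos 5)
Your proposal is correct and follows essentially the same route as the paper: the same implicit definition of $\alpha$ via the weighted integrand $e^{\int_0^s h}/K^{-1}(\sigma\hat F(s))$, the same two differentiations with \eqref{Phi3} applied to $s=e^{\int_0^\alpha h}\ge 1$ so that the $(2-\theta)$ exponent in $\hat F$ cancels the weights, the same integration to control $\frac{2m+1}{t}\varphi(\alpha')$, and the same limiting argument in $\sigma$ for both the bracket and the constraint $\alpha(t_1)\le\eta$. The only point worth making explicit (which the paper also leaves tacit) is that the bracket $\alpha''+(\alpha')^2h(\alpha)$ is non-negative, which is needed when replacing $\varphi'(\alpha'e^{\int_0^\alpha h})$ by its lower bound from \eqref{Phi3} while preserving the direction of the inequality; this follows immediately since the left-hand side of the differentiated identity is non-negative.
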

We are now ready to state the non-existence result for inequality
\eqref{equaz_col_meno}. The proof is a minor modification of the
one given for Theorem \ref{non_existence_th}, therefore we only
sketch the main points referring to Section \ref{principale} for
definitions and notations.

\begin{theorem}\label{TH_non_exist_meno}
  Let $\varphi, f,  h, g$ satisfy  \eqref{Phi}, \eqref{F}, \eqref{H}, \eqref{G}, \eqref{Phi0},
  \eqref{cond_ult_phi}, \eqref{Phi3}, and \eqref{generalized_hatK-O}.
    Let $u$ be a non-negative $C^1$-solution of
    \begin{equation}
      \Delta^\varphi_{H^m}u \geq f(u)-h(u)g(\absh{\gradh{u}})\qquad
      \text{on}\,\,\,
      H^m.
    \end{equation}
    Then $u \equiv 0$.
\begin{proof}
First of all, note that it is sufficient to prove that $u$ is
equal to a constant $c$; indeed, by assumption \eqref{G}, $0 =
\Delta^\varphi_{H^m}c \geq f(c) - h(c)g(0) = f(c)$ and the
conclusion follows from \eqref{F}. Now we prove that a maximum
principle holds for equation \eqref{equaz_col_meno} on a domain
$\Omega$; indeed, if we assume $u(\tilde{q}) = u^*$ for some
$\tilde{q}\in \Omega$, then there exists a neighbourhood
$U_{\tilde{q}} \subseteq \Omega$ such that, for every $\eps>0$,
$g(\absh{\gradh{u}}) < \eps$ on $U_{\tilde{q}}$. This implies, up
to choosing $\eps$ sufficiently small, $\Delta^\varphi_{H^m}u \geq
f(u)-h(u^*)\eps \geq 0$ on $U_{\tilde{q}}$. Then, by Theorem
\ref{maximum_principle}, $u \equiv u^*$ on such neighbourhood, and
thus the set $\set{q \in \Omega : u(q) = u^*}$ is non-empty, open
and closed in $\Omega$; therefore, $u \equiv u^*$ in $\Omega$.

 Eventually, in order to prove the constancy
of $u$, assume, by contradiction, that there exists $q_0 \in H^m$
such that $u(q_0) < u^*$; then, by the maximum principle, $u <
u^*$ on $H^m$. We now proceed as in the proof of Theorem
\ref{non_existence_th} and define $r_0$, $\eta$, $\tilde{q}$,
$\tilde{r}$ in the same way. Then, we construct the function $v(q)
= \alpha(r(q))$, with $\alpha$ as in Lemma \ref{Le_4.11}. A
calculation shows that
\begin{align*}
\Delta^\varphi_{H^m}v
&=\sqrt{\psi}\sq{\sqrt{\psi}\varphi'\pa{\alpha'(r)\sqrt{\psi}}\alpha''(r)
+\frac{2m+1}{r}\varphi\pa{\alpha'(r)\sqrt{\psi}}}\le\\
&\le\pa{\sqrt{\psi}}^{1+\tau}D\sq{\varphi'\pa{\alpha'(r)}\alpha''(r)
+\frac{2m+1}{r}\varphi\pa{\alpha'(r)}}\le\\
&\le\pa{\sqrt{\psi}}^{1+\tau}D\sq{\frac{1}{D}f(\alpha)-h(\alpha)(\alpha')^2\varphi'(\alpha')}\le\\
&\le f(\alpha(r))-\frac{D}{\widetilde{D}}
h(\alpha(r))g(\alpha'(r)\sqrt{\psi})\leq
f(v)-h(v)g(\absh{\gradh{v}}),
\end{align*}
where in the last inequality we have used \eqref{G} and we have
chosen
$D$ in \eqref{cond_ult_phi} big enough to ensure $D \geq \bar{D}$. \\
If $\xi$ lies in the connected component $\Gamma_\mu$, using
\eqref{F}, \eqref{H} and $\absh{\gradh{u}(\xi)} =
\absh{\gradh{v}(\xi)}$ we obtain
\begin{align}
\Delta^\varphi_{H^m} u (\xi) & \ge
f(u(\xi))-h(u(\xi))g(\absh{\gradh{u}(\xi)}) > \\%[0.2cm]
& > f(v(\xi))-h(v(\xi))g(\absh{\gradh{v}(\xi)}) \ge
\Delta^\varphi_{H^m} v (\xi).
\end{align}
The rest of the proof is  much the same.
\end{proof}
\end{theorem}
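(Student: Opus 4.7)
The plan is to mirror the architecture of the proof of Theorem \ref{non_existence_th}, with the appropriate adjustments to account for the subtraction term $-h(u)g(\absh{\gradh{u}})$. As a preliminary reduction, I would first observe that condition \eqref{G} with $s=0$ forces $g(0)\le 0$, hence $g(0)=0$ since $g\ge 0$; consequently, once we know that $u$ is constant, say $u\equiv c$, the inequality evaluated pointwise gives $0=\Delta^\varphi_{H^m}c\ge f(c)-h(c)g(0)=f(c)$, and \eqref{F} forces $c=0$. So everything reduces to proving constancy of $u$.

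Next I would prove an ad hoc strong maximum principle for \eqref{equaz_col_meno}: if $u$ attains its supremum $u^*$ at some interior point $\tilde q$ of a domain $\Omega$, then by continuity $\absh{\gradh{u}}$ can be made arbitrarily small on a neighbourhood $U_{\tilde q}$ of $\tilde q$, hence by continuity of $g$ and $g(0)=0$, the term $h(u^*)g(\absh{\gradh{u}})$ can be made as small as we wish there, while $f(u)\ge f(u(\tilde q))>0$ (assuming $u^*>0$, otherwise there is nothing to prove). Thus $\Delta^\varphi_{H^m}u\ge 0$ on $U_{\tilde q}$, and Proposition \ref{maximum_principle} forces $u\equiv u^*$ on $U_{\tilde q}$. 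A standard connected–open–closed argument then propagates this equality throughout $\Omega$.

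Once the maximum principle is in place, I would argue by contradiction: suppose $u\not\equiv u^*$. Then $u<u^*$ everywhere, and I fix $r_0$, $\eta$, $\tilde q$, $\tilde r$ exactly as in Subsection \ref{principale}. The role of Lemma \ref{supersol} is now played by Lemma \ref{Le_4.11}: it provides a strictly increasing, blowing-up $\alpha:[r_0,T_\sigma)\to[\eps,+\infty)$ whose ODE is precisely the radial model of \eqref{equaz_col_meno}. Setting $v(q)=\alpha(r(q))$ and using the radial formula \eqref{radializz_deltaphi}, together with \eqref{cond_ult_phi} applied with $s=\sqrt{\psi}\le 1$ and with \eqref{G} (which is exactly tailored to absorb the factors $(\sqrt{\psi})^{1+\tau}D$ appearing in front of the $h$-term), one checks directly that $v$ is a pointwise supersolution of \eqref{equaz_col_meno} on the annulus $B_{T_\sigma}\setminus\overline{B}_{r_0}$; the constant $D$ in \eqref{cond_ult_phi} can be taken large enough to dominate $\widetilde D$.

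Finally, the contradiction is produced by the same comparison-on-level-sets argument used in Subsection \ref{principale}. The only point requiring a separate verification is that translating $v$ by a small constant $\rho$ preserves the subsolution property: since $\gradh{(v+\rho)}=\gradh{v}$, the monotonicity of $f$ in \eqref{F} yields $f(v)\le f(v+\rho)$, while the monotonicity of $h$ in \eqref{H} together with $g\ge 0$ yields $h(v+\rho)g(\absh{\gradh{(v+\rho)}})\le h(v)g(\absh{\gradh{v}})$; adding these two inequalities gives $\Delta^\varphi_{H^m}(v+\rho)\le f(v+\rho)-h(v+\rho)g(\absh{\gradh{(v+\rho)}})$, as required to invoke Proposition \ref{comparison} on a connected component $\Omega_{\xi,\rho}$ of $\{u>v+\rho\}$. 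The case $u^*=+\infty$ is handled with the same machinery and slightly easier boundary choices. The main technical obstacle I anticipate is the compatibility check between \eqref{cond_ult_phi} and \eqref{G} when propagating the density $\psi$ through the radial computation — this is where the structural shape of \eqref{G} (with exponent $\tau+1$ matched to $\varphi'$) becomes essential, and it is the point where the argument would break down for more general $g$.
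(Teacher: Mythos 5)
Your proposal follows the paper's proof essentially step for step: the reduction via $g(0)=0$ from \eqref{G}, the strong maximum principle obtained by absorbing $h(u)g(\absh{\gradh{u}})$ near an interior maximum and invoking Proposition \ref{maximum_principle}, the radial supersolution from Lemma \ref{Le_4.11} made compatible with the density $\psi$ through \eqref{cond_ult_phi} and \eqref{G} with $D\ge\widetilde D$, and the level-set comparison of Subsection \ref{principale}; your explicit check that $v+\rho$ remains a supersolution (using the monotonicity of $f$ and the non-increase of $h$) is a detail the paper leaves implicit. The only slip is the line ``$f(u)\ge f(u(\tilde q))$'' in the maximum-principle step, which should instead be a lower bound $f(u)\ge f(u^*-\delta)>0$ on a neighbourhood where $u>u^*-\delta$, a cosmetic point at the same level of rigor as the paper's own wording.
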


\begin{remark}
\emph{We note that the maximum principle is indeed unnecessary for
the proof of the final steps in Theorems \ref{non_existence_th}
and \ref{TH_non_exist_meno}. If we assume that $u$ is not
constant, we can consider a point $q_0$ such that $u(q_0)< u^*$
and, by continuity, a small radius $r_o$ such that $u|_{\partial
B_{r_0}}(q_0) < u^*$. Using the invariance property, we can
consider $q_0$ as the origin for the Koranyi distance, and proceed
analogously to the end.}

\end{remark}

As for Theorem \ref{non_existence_th}, we can state the Euclidean
counterpart of Theorem \ref{TH_non_exist_meno} substituting
assumption \eqref{G}  with the request
\begin{equation}\label{geuclidea}
    g(t) \le Dt^2\varphi'(t) \quad \text{on } (0,+\infty).
    \tag{$\widetilde{G}$}
\end{equation}

\noindent We have:
\begin{theorem}\label{TH_non_exist_meno euclideo}
  Let $\varphi, f,  h, g$
  satisfy \eqref{Phi}, \eqref{F}, \eqref{H}, \eqref{geuclidea}, \eqref{Phi0}, \eqref{cond_ult_phi},
  \eqref{Phi3},  and \eqref{generalized_hatK-O}.
    Let $u\in C^1(\erre^m)$ be a non-negative solution of
    \begin{equation}
      \Delta^\varphi_{\erre^m}u \geq f(u)-h(u)g(|\nabla{u}|)\qquad
      \text{on}\,\,\,
      \erre^m.
    \end{equation}
    Then $u\equiv 0$.
\end{theorem}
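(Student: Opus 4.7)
My plan is to mirror the proof of Theorem \ref{TH_non_exist_meno}, exploiting the simplifications available in the Euclidean setting where the density function $\psi$ plays no role. The first reduction is that it suffices to show $u$ is constant: if $u \equiv c$, then by \eqref{geuclidea} combined with \eqref{Phi0} and continuity of $g$ at $0$ we have $g(0)=0$, so $0 \geq f(c)$, and \eqref{F} forces $c=0$.

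Next I would establish a maximum principle: if $u(\tilde{x}) = u^* := \sup u$ at some $\tilde{x}\in\erre^m$, then by continuity of $\nabla u$ in a neighborhood $U_{\tilde{x}}$ the term $h(u)g(|\nabla u|)$ can be made arbitrarily small while $h(u)\le h(\tilde x)$, so $\Delta^\varphi_{\erre^m} u \geq 0$ on $U_{\tilde{x}}$, and the Euclidean analog of Proposition \ref{maximum_principle} (whose proof transfers with the obvious simplifications) propagates $u \equiv u^*$ to all of $\erre^m$. Arguing by contradiction I would assume $u \not\equiv u^*$, hence $u < u^*$ on $\erre^m$, fix $r_0>0$ and $\eta >0$ small with $u^*_0 := \sup_{\overline{B}_{r_0}} u < u^* - 2\eta$, and pick $\tilde{x}$ with $u(\tilde{x}) > u^* - \eta$.

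The heart of the argument is then the construction and comparison of $u$ with a radial supersolution $v(x) = \alpha(|x|)$ on the annular region $B_T \setminus B_{r_0}$, with $\alpha$ furnished by (a minor variant of) Lemma \ref{Le_4.11}. In the Euclidean setting the radial $\varphi$-Laplacian is $\varphi'(\alpha')\alpha'' + \frac{m-1}{r}\varphi(\alpha')$, bearing no $\sqrt{\psi}$ factor, which is precisely why \eqref{G} can be replaced by its $s=1$ instance \eqref{geuclidea}. Using $m-1 \leq 2m+1$ and $\varphi(\alpha')\ge 0$, Lemma \ref{Le_4.11} provides an $\alpha$ such that $\Delta^\varphi_{\erre^m}v \leq Cf(\alpha) - h(\alpha)(\alpha')^2\varphi'(\alpha')$ with $C$ as small as we wish via $\sigma$. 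Substituting $(\alpha')^2\varphi'(\alpha') \geq D^{-1}g(\alpha')$ from \eqref{geuclidea}, and if necessary absorbing the constant $D$ by running Lemma \ref{Le_4.11} with $h$ replaced by $Dh$ inside the defining integral \eqref{4.14} for $\alpha$, yields $\Delta^\varphi_{\erre^m} v \leq f(v) - h(v)g(|\nabla v|)$, so that $v$ is indeed a (classical) supersolution of the inequality on the chosen annulus.

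The rest of the proof is identical to Section \ref{principale}: the difference $u-v$ attains a positive maximum $\mu$ at a point $\xi\in B_T\setminus \overline{B}_{r_0}$, at which $u(\xi)>v(\xi)$ and $|\nabla u(\xi)| = |\nabla v(\xi)|$. Since $f$ is strictly increasing by \eqref{F} and $h$ monotone non-increasing by \eqref{H}, one gets $\Delta^\varphi_{\erre^m}u(\xi) > \Delta^\varphi_{\erre^m}v(\xi)$, and this strict inequality extends by continuity to an open set containing the connected component $\Gamma_\mu$ of the level set; the Euclidean analog of Proposition \ref{comparison}, whose proof transfers with no $\psi$ to contend with, then yields a contradiction on suitably shrinking superlevel sets. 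The main technical obstacle is the bookkeeping of constants in producing the supersolution, specifically reconciling the coefficient in front of $f$ coming from Lemma \ref{Le_4.11} with the constant $D$ appearing in \eqref{geuclidea}; this is handled by the scaling freedom in $\sigma$ together with, if required, the above obvious modification of the defining integral for $\alpha$.
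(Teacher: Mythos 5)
Your proposal is correct and follows exactly the route the paper itself intends (and only sketches): repeat the proof of Theorem \ref{TH_non_exist_meno}, observing that the Euclidean radial operator $\varphi'(\alpha')\alpha''+\frac{m-1}{r}\varphi(\alpha')$ carries no $\sqrt{\psi}$ factor, so that the supersolution of Lemma \ref{Le_4.11} works verbatim and \eqref{G} can be weakened to \eqref{geuclidea}. The one caveat concerns the constant bookkeeping you rightly flag: since the coefficient of $-h(\alpha)(\alpha')^2\varphi'(\alpha')$ produced by Lemma \ref{Le_4.11} is exactly $1$ and cannot be improved by shrinking $\sigma$, your fix of running the lemma with $Dh$ in place of $h$ strictly requires \eqref{generalized_hatK-O} with $Dh$ substituted for $h$, which coincides with the stated hypothesis only when $D\le 1$ or when $h\in L^1(+\infty)$ (via Proposition \ref{PR_4.7}); this imprecision, however, is already latent in the theorem as stated in the paper and is not a defect introduced by your argument.
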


\subsection{Another existence result for the $p$-Laplacian}
As a quick application of Lemma \ref{PR_4.7} and Theorem
\ref{th_existence}, we can deduce that the modified
Keller-Osserman condition \eqref{generalized_hatK-O} is optimal in
the case of the $p$-Laplacian.
\begin{theorem}\label{esistenzameno}
 Let $f,  h, g$ satisfy \eqref{F}, \eqref{H}, \eqref{G},
  \eqref{cond_ult_phi} and \eqref{Phi3} with $\tau=0$. Furthermore suppose
  that $h \in L^1(\erre^+)$. Then, the following conditions are
equivalent:
\begin{itemize}
    \item [i)] there exists a non-negative, non-constant
        solution $u \in C^1(H^m)$ of inequality
        $\Delta^p_{H^m} u \ge f(u) - h(u)g(\absh{\gradh{u}});$
    \item [ii)] $\displaystyle \frac{1}{K^{-1}(F(t))} \not\in
        L^1(+\infty)$.
 \end{itemize}
\end{theorem}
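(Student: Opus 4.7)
The plan is to read both implications off from earlier results, so that the work reduces essentially to matching up hypotheses. For $ii) \Rightarrow i)$ I apply Theorem \ref{th_existence} with the trivial choice $l \equiv 1$; for $i) \Rightarrow ii)$ I argue by contraposition, using Proposition \ref{PR_4.7} to transfer \eqref{generalized_K-O} into \eqref{generalized_hatK-O}, and then invoke Theorem \ref{TH_non_exist_meno}.

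To carry out the first direction, I observe that $l \equiv 1$ is continuous, positive, and $1$-monotone non-decreasing, hence satisfies \eqref{L}. Since $p>1$, the ratio $t^{p-1}/l(t) = t^{p-1}$ tends to $0$ as $t \to 0^+$, and the growth bound in \eqref{PeL} is satisfied with $B_1 = 1$, $B_2 = 0$, any $\mu \in [0,1)$. Moreover, for $s \in [0,1]$ one has $l(t)s^p = s^p \le 1 = l(st)$, so \eqref{L2p} holds with $\Lambda = 1$. Crucially, the function $K$ appearing in condition $ii)$ is exactly the one entering Theorem \ref{th_existence} with this choice of $l$. Hence, assuming $1/K^{-1}(F(t)) \not\in L^1(+\infty)$, Theorem \ref{th_existence} delivers a non-negative, non-constant $u \in C^1(H^m)$ with $\Delta^p_{H^m} u \ge f(u)$. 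Since $h \ge 0$ and $g \ge 0$ on their respective domains, the same $u$ trivially satisfies
\begin{equation*}
\Delta^p_{H^m} u \ge f(u) \ge f(u) - h(u) g(\absh{\gradh{u}}),
\end{equation*}
which proves $i)$.

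For the reverse direction I proceed by contradiction: suppose $u$ is a non-constant solution of the inequality in $i)$, yet $1/K^{-1}(F(t)) \in L^1(+\infty)$. Since $h \in L^1(\erre^+)$ and conditions \eqref{Phi}, \eqref{F}, \eqref{Phi3} are in force for the $p$-Laplacian, Proposition \ref{PR_4.7} shows that \eqref{generalized_hatK-O} also holds. All the hypotheses of Theorem \ref{TH_non_exist_meno} are then available: \eqref{Phi0} reduces to $(p-1)t^{p-1}\in L^1(0^+)$, which is clear for $p>1$; \eqref{F}, \eqref{H}, \eqref{G}, \eqref{cond_ult_phi}, \eqref{Phi3} are assumed; and \eqref{generalized_hatK-O} has just been deduced. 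Consequently Theorem \ref{TH_non_exist_meno} forces $u \equiv 0$, contradicting the assumed non-constancy. This finishes the proof.

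No step here is substantially new; the only technical care needed is the correct identification of the $K$ of Theorem \ref{th_existence} (which involves $l$) with the $K$ of the present statement (where no $l$ appears), and the verification that the parameters $\tau=0$, $\Lambda=1$ make every structural condition trivially satisfied by $l \equiv 1$. The key conceptual point, which I expect to be the most delicate to justify cleanly, is precisely the bridge provided by Proposition \ref{PR_4.7}: the $L^1$ hypothesis on $h$ collapses \eqref{generalized_hatK-O} and \eqref{generalized_K-O} into a single condition, so that non-existence for \eqref{equaz_col_meno} in the $p$-Laplacian setting reduces cleanly to the sharper non-existence result for \eqref{filaplaciano}.
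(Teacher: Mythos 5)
Your proposal is correct and follows exactly the route the paper takes: Proposition \ref{PR_4.7} to identify \eqref{generalized_K-O} with \eqref{generalized_hatK-O} under $h\in L^1(\erre^+)$, Theorem \ref{TH_non_exist_meno} for $i)\Rightarrow ii)$, and Theorem \ref{th_existence} with $l\equiv 1$ (whose structural hypotheses you rightly check are trivially satisfied) for $ii)\Rightarrow i)$, noting that a subsolution of $\Delta^p_{H^m}u\ge f(u)$ is a fortiori a subsolution of the inequality with the non-negative gradient term subtracted. Your verification of the hypotheses is in fact slightly more explicit than the paper's own.
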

\vspace{0.2cm}

\begin{proof}

First, we deduce from the assumptions and from Lemma \ref{PR_4.7}
the equivalence between \eqref{generalized_K-O} and
\eqref{generalized_hatK-O}. We have already pointed out that the
$p$-Laplacian satisfies \eqref{cond_ult_phi} for every $0\le \tau
\le p-1$: as it can be checked, the choice of $\tau=0$ is the
least stringent on \eqref{G}. Furthermore, \eqref{Phi0} is
authomatic. This shows that implication $i) \Rightarrow ii)$ is an
immediate application of Theorem \ref{TH_non_exist_meno}.
Regarding the other one, set $l(t) \equiv 1$ and apply the
existence part of Theorem \ref{th_existence} (note that all the
assumptions are satisfied), to get a solution of
$$
\Delta^p_{H^m}u \ge f(u).
$$
Since the RHS is trivially greater than
$f(u)-h(u)g(\absh{\gradh{u}})$ we have the desired conclusion.

\end{proof}

\bibliographystyle{amsplain}
\bibliography{the_bibliography}

\end{document}